\newcommand{\pspace}{$(\Omega,\mathcal{F},\mathbb{P})$}
\newcommand{\R}{\mathbb{R}}
\newcommand{\rr}{\mathbb{R}}
\newcommand{\E}{\mathbb{E}}
\newcommand{\PP}{\mathbb{P}}
\newcommand{\pp}{\mathbb{P}}
\newcommand{\F}{\mathbb{F}}
\newcommand{\cF}{\mathcal{F}}
\newcommand{\one}{\mathds{1}}
\newcommand{\lambdai}{\lambda_{\infty}}
\newcommand{\Top}{^{\textrm{T}}}
\newcommand*\diff{\mathop{}\!\mathrm{d}}
\newcommand{\Za}{Z^\alpha}
\newcommand{\hNa}{\hat{N}^\alpha}
\newcommand{\Nan}{N^{n,\alpha}}
\newcommand{\hNan}{\hat{N}^{n,\alpha}}
\newcommand{\hN}{\hat{N}}
\newcommand{\hNn}{\hat{N}^n}
\newcommand{\Man}{M^{n,\alpha}}
\newcommand{\varrhoa}{\varrho^{\alpha}}
\newcommand{\Zn}{Z^n}
\newcommand{\Mn}{M^n}
\newcommand{\Nn}{N^n}
\newcommand{\Xn}{X^n}
\newcommand{\lambdaa}{\lambda^\alpha}
\newcommand{\Lambdaa}{\Lambda^\alpha}
\DeclareOldFontCommand{\rm}{\normalfont\rmfamily}{\mathrm}
\DeclareOldFontCommand{\sf}{\normalfont\sffamily}{\mathsf}
\DeclareOldFontCommand{\tt}{\normalfont\ttfamily}{\mathtt}
\DeclareOldFontCommand{\bf}{\normalfont\bfseries}{\mathbf}
\DeclareOldFontCommand{\it}{\normalfont\itshape}{\mathit}
\DeclareOldFontCommand{\sl}{\normalfont\slshape}{\@nomath\sl}
\DeclareOldFontCommand{\sc}{\normalfont\scshape}{\@nomath\sc}
\DeclareRobustCommand*\cal{\@fontswitch\relax\mathcal}
\DeclareRobustCommand*\mit{\@fontswitch\relax\mathnormal}
\newcommand{\diag}{\textrm{diag}}
\newcommand{\ediag}{\emph{\textrm{diag}}}
\newcommand{\Var}{\textrm{Var}}
\def\newrmtheorem#1{\@ifnextchar[{\@rmothm{#1}}{\@rmnthm{#1}}}
\def\@rmnthm#1#2{%
\@ifnextchar[{\@rmxnthm{#1}{#2}}{\@rmynthm{#1}{#2}}}
\def\@rmxnthm#1#2[#3]{\expandafter\@ifdefinable\csname #1\endcsname
{\@definecounter{#1}\@addtoreset{#1}{#3}%
\expandafter\xdef\csname the#1\endcsname{\expandafter\noexpand
  \csname the#3\endcsname \@rmthmcountersep \@rmthmcounter{#1}}%
\global\@namedef{#1}{\@rmthm{#1}{#2}}\global\@namedef{end#1}{\@endrmtheorem}}}
\def\@rmynthm#1#2{\expandafter\@ifdefinable\csname #1\endcsname
{\@definecounter{#1}%
\expandafter\xdef\csname the#1\endcsname{\@rmthmcounter{#1}}%
\global\@namedef{#1}{\@rmthm{#1}{#2}}\global\@namedef{end#1}{\@endrmtheorem}}}
\def\@rmothm#1[#2]#3{\expandafter\@ifdefinable\csname #1\endcsname
  {\global\@namedef{the#1}{\@nameuse{the#2}}%
\global\@namedef{#1}{\@rmthm{#2}{#3}}%
\global\@namedef{end#1}{\@endrmtheorem}}}
\def\@rmthm#1#2{\refstepcounter
    {#1}\@ifnextchar[{\@rmythm{#1}{#2}}{\@rmxthm{#1}{#2}}}
\def\@rmxthm#1#2{\@beginrmtheorem{#2}{\csname the#1\endcsname}\ignorespaces}
\def\@rmythm#1#2[#3]{\@opargbeginrmtheorem{#2}{\csname
       the#1\endcsname}{#3}\ignorespaces}
\def\@rmthmcounter#1{\noexpand\arabic{#1}}
\def\@rmthmcountersep{}
\def\@beginrmtheorem#1#2{\rm \trivlist
      \item[\hskip \labelsep{\bf #1\ #2\thmrmcounterend}]}
\def\@opargbeginrmtheorem#1#2#3{\rm \trivlist
      \item[\hskip \labelsep{\bf #1\ #2\ (#3)\thmrmcounterend}]}
\def\@endrmtheorem{\endtrivlist}
\def\thmrmcounterend{\hskip 0em\relax}
\def\newrmwntheorem#1#2{\expandafter\@ifdefinable\csname #1\endcsname%
\global\@namedef{#1}{\@rmwnthm{#1}{#2}}%
\global\@namedef{end#1}{\@endrmwntheorem}}
\def\@rmwnthm#1#2{\@ifnextchar[{\@rmwnythm{#1}{#2}}{\@rmwnxthm{#1}{#2}}}
\def\@rmwnxthm#1#2{\@beginrmwntheorem{#2}\ignorespaces}
\def\@rmwnythm#1#2[#3]{\@opargbeginrmwntheorem{#2}{#3}\ignorespaces}
\def\@beginrmwntheorem#1{\rm \trivlist
      \item[\hskip \labelsep{\bf #1\thmrmwncounterend}]}
\def\@opargbeginrmwntheorem#1#2{\rm \trivlist
      \item[\hskip \labelsep{\bf #1\ (#2)\thmrmwncounterend}]}
\def\@endrmwntheorem{\endtrivlist}
\def\thmrmwncounterend{.\hskip 1em\relax}
\def\newsltheorem#1{\@ifnextchar[{\@slothm{#1}}{\@slnthm{#1}}}
\def\@slnthm#1#2{%
\@ifnextchar[{\@slxnthm{#1}{#2}}{\@slynthm{#1}{#2}}}
\def\@slxnthm#1#2[#3]{\expandafter\@ifdefinable\csname #1\endcsname
{\@definecounter{#1}\@addtoreset{#1}{#3}%
\expandafter\xdef\csname the#1\endcsname{\expandafter\noexpand
  \csname the#3\endcsname \@slthmcountersep \@slthmcounter{#1}}%
\global\@namedef{#1}{\@slthm{#1}{#2}}\global\@namedef{end#1}{\@endsltheorem}}}
\def\@slynthm#1#2{\expandafter\@ifdefinable\csname #1\endcsname
{\@definecounter{#1}%
\expandafter\xdef\csname the#1\endcsname{\@slthmcounter{#1}}%
\global\@namedef{#1}{\@slthm{#1}{#2}}\global\@namedef{end#1}{\@endsltheorem}}}
\def\@slothm#1[#2]#3{\expandafter\@ifdefinable\csname #1\endcsname
  {\global\@namedef{the#1}{\@nameuse{the#2}}%
\global\@namedef{#1}{\@slthm{#2}{#3}}%
\global\@namedef{end#1}{\@endsltheorem}}}
\def\@slthm#1#2{\refstepcounter
    {#1}\@ifnextchar[{\@slythm{#1}{#2}}{\@slxthm{#1}{#2}}}
\def\@slxthm#1#2{\@beginsltheorem{#2}{\csname the#1\endcsname}\ignorespaces}
\def\@slythm#1#2[#3]{\@opargbeginsltheorem{#2}{\csname
       the#1\endcsname}{#3}\ignorespaces}
\def\@slthmcounter#1{.\noexpand\arabic{#1}}
\def\@slthmcountersep{}
\def\@beginsltheorem#1#2{\sl \trivlist
      \item[\hskip \labelsep{\bf #1\ #2\thmslcounterend}]}
\def\@opargbeginsltheorem#1#2#3{\sl \trivlist
      \item[\hskip \labelsep{\bf #1\ #2\ (#3)\thmslcounterend}]}
\def\@endsltheorem{\endtrivlist}
\def\thmslcounterend{\hskip 0em\relax}
\def\newslwntheorem#1#2{\expandafter\@ifdefinable\csname #1\endcsname%
\global\@namedef{#1}{\@slwnthm{#1}{#2}}%
\global\@namedef{end#1}{\@endslwntheorem}}
\def\@slwnthm#1#2{\@ifnextchar[{\@slwnythm{#1}{#2}}{\@slwnxthm{#1}{#2}}}
\def\@slwnxthm#1#2{\@beginslwntheorem{#2}\ignorespaces}
\def\@slwnythm#1#2[#3]{\@opargbeginslwntheorem{#2}{#3}\ignorespaces}
\def\@beginslwntheorem#1{\sl \trivlist
      \item[\hskip \labelsep{\bf #1\thmslwncounterend}]}
\def\@opargbeginslwntheorem#1#2{\sl \trivlist
      \item[\hskip \labelsep{\bf #1\ (#2)\thmslwncounterend}]}
\def\@endslwntheorem{\endtrivlist}
\def\thmslwncounterend{.\hskip 1em\relax}
\numberwithin{equation}{section}
\let\originaleqref\eqref
\renewcommand{\eqref}{Eq.~\originaleqref}
\title{Diffusion limits for a Markov modulated binomial counting process}
\author{Peter Spreij\thanks{Korteweg-de Vries Institute for Mathematics,
Universiteit van Amsterdam and IMAPP, Radboud University Nijmegen;
{\tt spreij@uva.nl}} \and Jaap Storm\thanks{Department of Mathematics, Vrije Universiteit; {\tt p.j.storm@vu.nl}}}
\date{\today}
\begin{document}

\maketitle

\begin{abstract} 
In this paper we study limit behavior for a \emph{Markov-modulated} (MM) binomial counting process, also called a binomial counting process under \emph{regime switching}. Such a process naturally appears in the context of \emph{credit risk} when multiple obligors are present. Markov-modulation takes place when the failure/default rate of each individual obligor depends on an underlying Markov chain.
The limit behavior under consideration occurs when the number of obligors increases unboundedly, and/or by accelerating the modulating Markov process, called \emph{rapid switching}.
We establish diffusion approximations, obtained by application of (semi)martingale central limit theorems. Depending on the specific circumstances, different approximations are found.
\smallskip\\
{\emph{Keywords:}} Functional limit theorems, central limit theorems, counting process, Markov-modulated process.
\smallskip\\
{\emph{AMS subject classification:}} 60F17, 60F05, 60G99.

\end{abstract}

\section{Introduction}\label{section:intro}

In this paper we study scaling limits of a \emph{Markov-modulated} (MM) counting process. Over the last decades Markov-modulation (as it is often referred to in the operations research literature) or \emph{Regime switching} (common terminology in e.g.\ mathematical finance) has become increasingly popular. Regime switching basically explains itself with its name. The parameters of the stochastic process change with time and the behavior of the process changes. The way this is usually modelled is to make the parameters of the process a function of a \emph{background process} (or \emph{modulating process}), and commonly the background process is assumed to be a finite state Markov chain, say with values in a finite set of $d$ elements. This explains the name Markov-modulation. 
The popularity of Markov-modulated processes is due to the fact that they provide a more flexible model of reality then their non-modulated versions. It is natural to assume that a real-life phenomenon, which is modelled by a stochastic process, reacts to some environment which evolves autonomously. This is far more likely than the basic case in which the parameters are constant over time. 

The process we consider has various applications. In (software) reliability modelling early variants are \citet{jelinski1972software,koch1983software,littlewood1975reliability}. The value of modelling (software) failures within randomly changing environments, including Markov-modulation, has been acknowledged for some time now, see e.g.\ \citet{ozekici2003reliability,ozekici2004reliability,ravishanker2008nhpp}.  In particular MM variants of \citet{jelinski1972software} have been studied, i.e. in a Bayersian set-up in \citet{landon2013markov}, with an estimation focus in \citet{ando2006estimating,hellmich2016statistical} and with an added failure rate component in \citet{subrahmaniam2015semiparametric}. A similar model to \citet{jelinski1972software} has been used in epidemiology (see \citet{andersson2012stochastic}) and a multivariate version of it in sampling design (see \citet{berchenko2017modeling}), where the latter can also be used to model job switching behavior due to recruiters.

An early application of Markov-modulation in economic modelling is \citet{hamilton1989new}. Since then Markov-modulation has been extensively used in various branches of mathematical finance. E.g.\ in optimal investment theory for pension funds~(\citet{chen2015optimal}), interest rate modelling~(\citet{ang2002regime,elliott2002interest,
elliott2009markov}) and affine processes~(\citet{van2014markov}). Other financial applications concern option and bond valuation~(\citet{buffington2002american,
elliott2011bond,jiang2008perpetual}), optimal dividend policies~(\citet{jiang2015optimal,
jiang2012optimal}), optimal portfolio and asset allocation~(\citet{
elliott2002portfolio,elliott1997application,zhou2003markowitz}) and also most notably in the modelling of credit risk and credit derivatives~(\citet{banerjee2016analyzing,banerjee2013pricing,
choi2015regime,
dunbar2007empirical,
giampieri2005analysis,
hainaut2016structural,
li2013pricing,liechty2013regime,
yin2009asymptotic}).
Markov-modulation has been used in insurance and risk theory as well~(\citet{asmussen2010ruin}).

Outside mathematical finance, a rich area of applications of regime switching is in operations research, where there is a sizeable body of work on Markov-modulated queues, see e.g. \citet{asmussen2008applied} and \citet{neuts1981matrix}. Contributions in this field with emphasis on scaling limits under rapid switching (leading to functional limit theorems which are also the subject of the present paper), are e.g.~\citet{anderson2016functional} and \citet{
blom2016functional}. Similar scaling limits have been obtained in~\citet{huang2016markov,
huang2014weak} and e.g.\ large deviations under scaling have been treated in~\citet{huang2016large}.

Following the considerable interest in MM financial models we consider scaling limits, also referred to as diffusion approximations, of a MM model that has a natural interpretation in Credit Risk, (see \citet{mandjes2016explicit}). In the basic setting there are $n$  obligors which have independently exponential distributed default times $\tau_i$ with intensity parameter $\lambda>0$. In the MM case this parameter is Markov-modulated, leading to an \emph{intensity process} $\lambda_t=f(Z_t)$, say for a nonnegative function $f$, where $Z$ is the Markovian background process. The process $N$ counts the number of obligors that have defaulted. At time $t$, the random variable $N_t$ is binomially distributed with parameters $n$ and $p=1-\E \exp(-\int_0^t f(Z_s) \diff s)$. Throughout the paper we will often use the credit risk context for explanation and illustration of certain features of the model, although as was explained, applications are not limited to this branch of mathematical finance.

In the present paper we study diffusion approximations (functional central limit theorems, Gaussian limits) for the process $N$, if we scale up the transition matrix of the underlying Markov chain by a factor $\alpha$ and let $n,\alpha \rightarrow \infty$. We find in principle different functional limits of the scaled and centered process, depending on the order in which parameters diverge, e.g.\ first  $\alpha \rightarrow \infty$, then  $n \rightarrow \infty$, or the other way around,  or if both $\alpha$ and $n$ jointly tend to infinity, possibly with different rates. In addition, we will also study limit behaviour for the case where the intensity vanishes at a certain rate as $n\to\infty$.

The remainder of the paper is organised as follows.  In Section~\ref{section:Z} we collect some useful results for the background Markov chain. In Section~\ref{Section Bin MM} we first construct the truly binomial process and prove in Section~\ref{sec:nonmm} a first result on diffusion approximation. Section~\ref{section:MM}, the body of the paper, is devoted to Markov modulated processes and contains the main results; we prove several limit theorems for this process in which the influence of different rates for $\alpha\to\infty$ and $n\to\infty$ is clearly visible. Some numerical examples illustrating the main results are presented in Section~\ref{section:pictures}. Finally, in Section~\ref{sec:recovery} we sketch some results for the case (in a credit risk context) where defaulted companies re-enter the market.

\section{The background process}\label{section:Z}

We will always work on a probability space \pspace. It is  assumed that the background process $Z$ is an ergodic (also called irreducible), time homogeneous Markov chain on a finite state space. Without loss of generality we assume that it takes values in the set of basis vectors $\{e_1,\ldots,e_d\}$ of $\rr^d$, with transition rates 
\begin{equation*}
q_{ji}=\frac{d}{dt}\bigg|_{t=0}\PP(Z_t=e_j|Z_0=e_i)\geq0 \quad i \neq j
\end{equation*} and $q_{ii} :=-\sum_{j \neq i} q_{ji}$. We let $Q$ be the matrix of all $q_{ij}$, also called the generator (of $Z$). Note that $\one\Top Q=0$, where $\one$ is the vector of all ones. Since $Z$ is ergodic, the limits $\pi_j={\lim_{t \rightarrow \infty}} \PP(Z_t=e_j|Z_0=e_i)$, $i,j \in \{1,\dots,d\}$ exist and are independent of $i$, and we have the column vector $\pi=(\pi_1,\dots,\pi_d)\Top$ satisfying $Q \pi=0$.
The ergodic matrix is given by $\Pi := \pi \one\Top$, has columns equal to $\pi$ and satisfies:
\begin{equation*}
\Pi^2=\Pi \textrm{ and } \Pi Q = Q\Pi = 0.
\end{equation*}
The \emph{fundamental matrix} is given by $F:=(\Pi-Q)^{-1}$. and the \emph{deviation matrix} is defined by $D:=F-\Pi$. Basic properties are: 
\begin{equation}\label{eq:D}
QF=FQ=\Pi-I, \quad F \one=\one, \quad \textrm{and } \one\Top D = 0, D\pi =0,
\end{equation}where the zeros should be read as a row or column vector. The deviation matrix can also be computed by 
\begin{equation*}
D=\int_0^\infty\exp(Qs)-\Pi \diff s,
\end{equation*} which follows from \citet[Equation~(2.14)]{glynn1984some}.

\noindent The deviation matrix of an ergodic Markov process can be interpreted as a measure of total deviation of the limiting probabilities. As such it will naturally find its way into the results of our limit theorems of the stochastic processes we observe. For a survey of the main results of deviation matrices we refer to \citet{coolen2002deviation}. 
\medskip\\
We will use a stochastic differential equation for $Z_t$. Given the process $Z_t$ on $(\Omega,\cF)$, Markovian relative to a filtration $\{\cF_t\}_{t\geq0}$, with initial state $z_0$ and with generator $Q$, one has by Dynkin's formula \citet[Proposition 1.6]{revuzyor2013} that
\begin{equation*}
\tilde M_t := Z_t - z_0 - \int_0^t Q Z_s \diff s 
\end{equation*} is a martingale relative to $\{\cF_t\}_{t\geq0}$. Rewriting this into a differential notation yields
\begin{equation}\label{dZ}
\diff Z_t=Q Z_t\diff t + \diff \tilde M_t, \quad Z_0 = z_0.
\end{equation}
This representation can be found in many papers, e.g.\ in \citet{elliott1993new}, where this result has a direct proof; see also~\citet{spreij1998representation} for a more general result. The martingale $\tilde M$ is square integrable, which implies that $\langle \tilde M \rangle_t$ exists. As a matter of fact, one has
\begin{align}\label{eq:qvarm}
\langle \tilde M \rangle_t= \int_0^t(\diag\{QZ_s\} - Q\diag\{Z_s\} - \diag\{Z_s\}Q\Top )\diff s,
\end{align}
see e.g.\ Proposition~3.2 and its proof in~\citet{huang2016markov}, and 
\begin{equation}\label{eq:dpi}
D\diag\{\pi\}+\diag\{\pi\}D\Top \mbox{ is nonnegative definite}.
\end{equation} 
Ergodicity of the Markov chain implies the \emph{continuous-time ergodic theorem} (see \citet[Theorem~3.8.1]{norris1998markov}). For $t\to\infty$, it holds that 
$\frac{1}{t} \int_0^t Z_s \diff s \overset{a.s.}{\rightarrow} \pi$. Often we will use this result in the following form,
\begin{equation}\label{eq:ergon}
\frac{1}{m} \int_0^{mt} Z_s \diff s \overset{a.s.}{\rightarrow} \pi t, \mbox{ when }m\to \infty. 
\end{equation}
\medskip\\
We close with a remark on notation. For any process $X$ we will use the generic notation $\F^X$ for the filtration generated by $X$, i.e.\ $\F^X=\{\cF^X_t\}_{t\geq 0}$, with $\cF^X_t=\sigma(X_s, 0\leq s\leq t)$.

\section{The Markov modulated binomial point process}\label{Section Bin MM}

The Markov-modulated binomial point process, or counting process, (as we refer to it) is used in a variety of applications under which are software reliability and intensity based credit risk modelling with the canonical set-up of $n$ obligors and independent default times. Especially the latter case provides a convenient context to explain some fundamental features of this process. Let us first introduce the non-modulated process.
We assume there are $n$ obligors with \emph{independent} default times $\tau^i$, $i \in \{1,\dots,n\}$. All $\tau^i$ are exponentially distributed with parameter $\lambda>0$ which gives us that the process $Y^i_t=\one_{\{t \leq \tau^i\}}$ satisfies 
\begin{equation}\label{dYi}
\diff Y^i_t = \lambda(1-Y^i_t)\diff t + \diff M^i_t, \quad Y^i_0 = 0
\end{equation}
for a martingale $M^i$ with respect to the filtration generated by $Y^i$ and the $\tau^i$. We then take $N_t:=\sum_{i=1}^n Y^i_t$ as the first process we are interested in. It then follows from the independence assumption and \eqref{dYi} that we have for the process $N$ the submartingale decomposition
\begin{equation}\label{dN Bin}
\diff N_t =\lambda(n-N_t)\diff t + \diff M_t,\quad N_0 = 0
\end{equation} 
where $M$ is an $\F^N$-martingale. We note that this model has already been introduced in Software reliability models many years ago, see e.g.~\citet{jelinski1972software,koch1983software} for early contributions and other references in Section~\ref{section:intro}. Note that for fixed $t>0$, the random variable has a Binomial$(n,p_t)$ distribution, with $p_t=1-\exp(-\lambda t)$.

This model can be generalized in many ways to one in which the (default) intensity is not a constant $\lambda$, but a time varying, random quantity $\lambda_t$. The distributional properties of $N$ are then determined by specific choices of $\lambda_t$ and equations like~\eqref{dN Bin} and its variations further on are consequences of the general martingale characterization of counting processes, see e.g.~\citet[Theorem~II.T8]{bremaud1981point}.

Our interest is to take a Markov-modulated rate $\lambda_t=\lambda\Top Z_{t-}$, where $\lambda$ is now a \emph{vector} in $\R^d_+$ (the meaning of the symbol $\lambda$ thus depends on the context, but this shouldn't cause any confusion), and $Z$ is the indicator process of the Markov chain $A$, see Section~\ref{section:Z}. We then get the following stochastic differential equation (SDE) model for the process $N$,
\begin{equation}\label{dN Bin MM}
\diff N_t = \lambda\Top Z_t(n-N_t) \diff t + \diff M_t, \quad N_0 = 0
\end{equation} where $M$ is a martingale with respect to $\F = \{\cF^Y_t \vee \cF^Z_\infty, t\geq 0\}$, which can be justified by conditional independence of the default times, given the process $Z$. In this stochastic intensity case one has that $N_t$, given the full process $Z$, has a Bin$(n,1-\exp(-\Lambda_t))$ distribution, with $\Lambda_t=\int_0^t \lambda_s \diff s = \int_0^t \lambda\Top Z_s \diff s$. We call $N$ the Markov-modulated binomial point process. See also \citet{mandjes2016explicit} for further details on the construction of this process, and for a justification of the following reasonable assumption.

\begin{assumption}\label{assumption1}
The processes $N$ and $Z$ never jump at the same time, i.e.\ the optional quadratic covariation process $[N,Z]$ is identically zero (with probability one).
\end{assumption}
There are also situations known where this assumption is violated by construction, see~\citet{spreij1990self} for an example.

\section{Limit theorems for the non-modulated binomial process}\label{sec:nonmm}

Let us first, as a warming up and for future reference, consider the truly binomial non-modulated process. Recall \eqref{dN Bin}, where we have that $\lambda>0$ is a constant. Since the process $N$ is distributed Bin$(n,1-\exp(-\lambda t))$ we have $\E N_t= n(1-\exp(-\lambda t))$. Below we will use $\varrho_t:=1-\exp(-\lambda t))$, which satisfies the ODE
\begin{equation}\label{eq:rho1}
\dot\varrho_t = \lambda(1-\varrho_t), \quad \varrho_0 = 0
\end{equation} 
This will function as the centering process for $N$, as $\E N_t=n\varrho_t$, in the following proposition.

\begin{proposition}\label{Easy CLT N Bin}
Let $\lambda>0$ be constant and let $N$ be given by \eqref{dN Bin}.  Then the scaled and centered process 
\begin{equation*}
\hat{N}^n_t := n^{-1/2}(N_t-n\varrho_t)
\end{equation*}
converges weakly to the solution of the following SDE,
\begin{equation*}
\diff \hat{N}_t=-\lambda \hat{N}_t \diff t + \diff B_t, \quad \hat{N}_0=0
\end{equation*} as $n \rightarrow \infty$.
Here $B$ is a continuous Gaussian martingale with $\langle B \rangle_t=1 - e^{ -\lambda t}$.
\end{proposition}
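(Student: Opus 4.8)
The plan is to reduce the statement to a martingale central limit theorem for the rescaled noise, followed by a continuous-mapping argument for the resulting linear SDE. First I would compute the dynamics of $\hNn$ directly. Since $n\dot\varrho_t=\lambda(n-n\varrho_t)$, subtracting this from the drift in \eqref{dN Bin} gives $\lambda(n-N_t)-\lambda(n-n\varrho_t)=-\lambda(N_t-n\varrho_t)$, so that after multiplying by $n^{-1/2}$,
\begin{equation*}
\diff \hNn_t = -\lambda\, \hNn_t \diff t + \diff \hat M^n_t, \quad \hNn_0=0, \qquad \hat M^n_t:=n^{-1/2}M_t .
\end{equation*}
This is an Ornstein--Uhlenbeck--type linear SDE with deterministic coefficients driven by the rescaled martingale $\hat M^n$, with the explicit solution $\hNn_t=\int_0^t e^{-\lambda(t-s)}\diff \hat M^n_s$ (verified by the product rule applied to $e^{\lambda t}\hNn_t$, which is legitimate despite the jumps of $\hat M^n$ since $e^{\lambda t}$ is continuous and of finite variation). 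The limit SDE has exactly the same structure, so it suffices to prove $\hat M^n \Rightarrow B$ in the Skorokhod space and then push the convergence through the solution map.

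For the martingale CLT I would verify the two standard hypotheses (e.g.\ via Rebolledo's theorem). Since $N$ is a counting process whose $\F^N$-intensity is the superposition $\lambda(n-N_{t-})$ of the individual intensities, the predictable quadratic variation of $M$ is $\langle M\rangle_t=\int_0^t \lambda(n-N_{s-})\diff s$, and hence
\begin{equation*}
\langle \hat M^n\rangle_t = \int_0^t \lambda\Big(1-\tfrac{N_s}{n}\Big)\diff s .
\end{equation*}
Because $N_s\sim\mathrm{Bin}(n,\varrho_s)$ we have $\E[N_s/n]=\varrho_s$ and $\Var(N_s/n)=\varrho_s(1-\varrho_s)/n\to0$, so $N_s/n\to\varrho_s$ in $L^2$ for each $s$; by boundedness of the integrand and dominated convergence $\langle \hat M^n\rangle_t \to \int_0^t \lambda(1-\varrho_s)\diff s = 1-e^{-\lambda t}=\langle B\rangle_t$ in $L^1$, hence in probability, for every $t$. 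The Lindeberg (negligible-jump) condition is immediate: $M$ jumps by $+1$, so every jump of $\hat M^n$ has size exactly $n^{-1/2}$, and for fixed $\epsilon>0$ there are no jumps exceeding $\epsilon$ once $n>\epsilon^{-2}$. Since the limiting bracket $1-e^{-\lambda t}$ is continuous and deterministic, these two facts yield the full functional convergence $\hat M^n \Rightarrow B$ in $D[0,\infty)$, with $B$ the claimed continuous Gaussian martingale.

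Finally I would transfer this to $\hNn$. Integration by parts rewrites the representation as $\hNn_t = \hat M^n_t - \lambda\int_0^t e^{-\lambda(t-s)}\hat M^n_s\diff s$, exhibiting $\hNn$ as the image of $\hat M^n$ under a map $\Phi:D[0,\infty)\to D[0,\infty)$ which is continuous at every continuous path (if $m^n\to m$ in the Skorokhod topology with $m$ continuous, then $m^n\to m$ locally uniformly, and both terms of $\Phi$ converge accordingly). As $B$ has continuous paths, the continuous mapping theorem gives $\hNn=\Phi(\hat M^n)\Rightarrow \Phi(B)=\hN$, and $\Phi(B)$ is precisely the solution of the limit SDE. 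The main obstacle is the martingale CLT step, specifically confirming convergence of the predictable quadratic variation to a deterministic continuous limit; here the explicit binomial mean and variance render even that routine, and the only remaining care is to invoke a version of the CLT whose continuous deterministic limiting bracket upgrades the bracket convergence and the Lindeberg condition to weak convergence of the processes themselves in the Skorokhod topology.
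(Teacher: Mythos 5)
Your proof is correct, and it shares the paper's overall skeleton: derive the linear SDE $\diff \hat{N}^n_t=-\lambda\hat{N}^n_t\diff t+\diff \hat{M}^n_t$, prove a martingale CLT for the rescaled noise $\hat{M}^n=n^{-1/2}M$, then transfer the convergence to the solution. The differences are in how the two steps are executed. For the bracket convergence the paper uses the strong law of large numbers ($N_s/n\to \E Y^1_s$ a.s., plus dominated convergence), while you use the explicit binomial mean and variance to get $L^2$, hence $L^1$, hence in-probability convergence of $\langle \hat{M}^n\rangle_t$; both are routine and valid. The genuine divergence is in the transfer step: the paper passes to $\hat{X}^n_t=e^{\lambda t}\hat{N}^n_t$ and invokes (by reference to the detailed proofs of the Markov-modulated section) the P-UT condition and the weak-convergence theorem for stochastic integrals, \citet[VI.6.13, VI.6.22]{jacod2013limit}, whereas you integrate by parts to write $\hat{N}^n_t=\hat{M}^n_t-\lambda\int_0^t e^{-\lambda(t-s)}\hat{M}^n_s\diff s$, exhibiting $\hat{N}^n=\Phi(\hat{M}^n)$ for a map $\Phi$ on Skorokhod space that is continuous at continuous limits, and finish with the continuous mapping theorem. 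Your route is more elementary and fully self-contained (no P-UT, no stochastic-integral convergence theorem), but it exploits the fact that the kernel $e^{-\lambda(t-s)}$ is deterministic and of finite variation; in the modulated setting of Section~\ref{section:MM} the exponent becomes the random process $\lambda\Top\zeta^n$, so this shortcut breaks down there, which is exactly why the paper sets up the heavier machinery that its own proof of this proposition already leans on. In short: what you do buys a cleaner, standalone proof of this proposition; what the paper does buys uniformity with the arguments needed later.
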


\begin{proof}
First we will determine the limit of the martingale $M^n=M/\sqrt{n}$ in \eqref{dN Bin}. Note that $|\Delta M^n_t|=|\Delta M_t|/\sqrt{n}  \leq 1/\sqrt{n} \rightarrow 0$, and that $\E M_t^2<\infty$ for all $t$. 
We want to prove that $\langle M^n \rangle_t \stackrel{\pp}{\rightarrow} C_t$ for some deterministic $C_t$, so that can apply the martingale central limit theorem~\citet[Theorem~VIII.3.11]{jacod2013limit}. By standard results for the compensator of a counting process,  $\langle M \rangle_t = \int_0^t \lambda(n-N_s) \diff s$. 

Using this expression for $\langle M \rangle_t$ we have that (if $n \rightarrow \infty$)
\begin{align*}
\langle M^n \rangle_t 
& = \int_0^t \lambda - \frac{\lambda N_s}{n} \diff s \\
& \overset{a.s.}{\rightarrow} \int_0^t \lambda - \lambda \E Y_t^1 \diff s \\
& = \int_0^t \lambda e^{-\lambda s} \diff s = 1 - e^{ -\lambda t},
\end{align*} where we applied the dominated convergence theorem to establish almost sure convergence of $N_t/n$ (dominated by $1$) to $\E Y^1_t$ by the strong law of large numbers. Hence we can apply the citetd martingale central limit theorem to find that $M$ converges weakly to a Gaussian martingale $B$ with $\langle B \rangle_t =1 - e^{ -\lambda t}$.

Now we consider the process $\hat{N}^n_t = n^{-1/2}( N_t - n \varrho_t)$. 
Taking the differentials and rewriting gives us
\begin{align*}
\diff \hat{N}^n_t & = - \lambda \hat{N}^n_t \diff t +  \diff M^n_t.
\end{align*}
Now we define $\hat{X}^n_t :=\exp(\lambda t) \hat{N}^n_t$, to get
$
\diff \hat{X}^n_t = e^{\lambda t} \diff M^n_t$.
By similar reasoning as in proofs of the next section where we spell out the details, we find that $\hat{X}^n$ converges in distribution to $\hat{X}=\int_0^\cdot\exp(\lambda t) \diff B_t$ and we find that $\hat{N}^n_t \overset{d}{\rightarrow} \hat{N}$, where $\hat{N}$ satisfies the SDE
\begin{equation*}
\diff \hat{N}_t = -\lambda \hat{N}_t \diff t + \diff B_t, \quad \hN_0 = 0.
\end{equation*}
\end{proof}

\begin{remark}
The binomial distribution of $N_t$ for fixed $t$, can be exploited in an application of the ordinary central limit theorem, which tells us that $\hNn_t$ has a limiting normal distribution with variance $e^{-\lambda t}(1-e^{-\lambda t})$. This is, of course, in full agreement with the functional limit result of Proposition~\ref{Easy CLT N Bin}, as can quickly be seen by computing the variance of $\hN_t$.
\end{remark}

\section{Limit results for the MM binomial process}\label{section:MM}

In this section we will prove limit results for the MM binomial point process with a Markov modulated rate. In principle, one can prove various types of limit theorems.  We focus on results in central limit form, i.e.\ on diffusion approximations. These are obtained for $n\to\infty$ in \eqref{dN Bin MM}, whereas we also investigate limit behaviours  by scaling the generator of the background process Markov chain via $Q \mapsto \alpha Q$, and letting $\alpha\to\infty$. As we are interested in the limit behaviour for both $n\to\infty$ and $\alpha\to\infty$, various possibilities occur. We will investigated iterated limits (first $n\to\infty$, then $\alpha\to\infty$ or vice versa), or joint limit behaviour when certain specified relationships between $n$ and $\alpha$ come into play. We shall also investigate the impact of different choices for the centering processes.

As a side remark, we mention that alternative scalings may lead to completely different limit results. For instance, if one would scale the vector $\lambda$ to $\frac{\lambda}{n}$, keeping $Q$ fixed, one would get a MM Poisson process, with intensity process $\lambda\Top Z_t$, see e.g.~\citet[Theorem~VIII.4.10]{jacod2013limit}, or~\citet[Theorem 1, p.~588]{liptser2012theory}. Another case, where the intensity is scaled as $\frac{\lambda}{n^\gamma}$ with $\gamma\in (0,1)$, leading again to a diffusion limit, is treated at the end of this section.

Contrary to the non-modulated case, in the MM case the consequences of a scaling $Q \mapsto \alpha Q$ for some $\alpha\to\infty$, will have a major impact. To make the dependence of the corresponding processes on $n$ and $\alpha$ explicit, we denote the resulting processes by $\Nan$, $\Man$ and $\Za$, giving the following SDE which is an analogy to \eqref{dN Bin MM}
\begin{equation}\label{dNan}
\diff \Nan_t = \lambda\Top \Za_t (n-\Nan_t) \diff t + \diff \Man_t, \quad \Nan_0 = 0.
\end{equation}
We will prove  functional limit theorems of central limit type. However the centering process $\varrho$ will, in the case $n\to\infty$, not always be the asymptotic limit of the expectation. It may depend on $\alpha$ and we will make this explicit in the notation. 
We first present a theorem for $n \rightarrow \infty$ and then $\alpha \rightarrow \infty$. Second comes the theorem in which the limits are interchanged. We write $\lambdaa_t$ for $\lambda\Top Z^\alpha_t$ and $\Lambdaa_t=\int_0^t\lambdaa_s\diff s$.

\begin{theorem}\label{MMBin hoofdstelling}
Let $\Nan$ be given by \eqref{dNan} for $\lambda \in \R^d_+$ and let $\varrhoa$ be given by \begin{equation*}
\dot\varrho^\alpha_t=\lambda\Top \Za_t (1-\varrhoa_t), \quad \varrhoa_0 = 0. 
\end{equation*} Then the scaled and compensated process 
\begin{equation*}
\hNan_t = n^{-1/2}(\Nan_t-n\varrhoa_t),
\end{equation*} converges, as $n\to\infty$, weakly to the solution of the following stochastic differential equation
\begin{equation}\label{eq:na}
\diff \hat{N}^\alpha_t = -\lambda\Top Z^\alpha_t \hat{N}^\alpha_t \diff t + \diff B^\alpha_t, \quad \hNa_0 = 0
\end{equation}
where $B^\alpha$ is a continuous martingale with $\langle B^\alpha\rangle_t=1-\exp(-\Lambda^\alpha_t)$.

Moreover, for $\alpha\to\infty$, the process $\hat{N}^\alpha$ converges weakly to the solution of 
\begin{equation}\label{eq:hn}
\diff \hN_t = -\lambdai \hN_t \diff t + \diff B_t, \quad \hN_0 = 0
\end{equation}
where $B$  is a Gaussian martingale with $\langle B \rangle_t=1-\exp(-\lambdai t)$ where $\lambdai=\lambda\Top\pi$.
\end{theorem}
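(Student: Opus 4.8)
The plan is to prove the two convergence statements separately, treating the inner limit $n\to\infty$ (for fixed $\alpha$) first and then the outer limit $\alpha\to\infty$. For the first statement, the strategy mirrors the proof of Proposition~\ref{Easy CLT N Bin} exactly, the only difference being that the constant rate $\lambda$ is replaced by the random process $\lambdaa_t=\lambda\Top\Za_t$. I would start from \eqref{dNan} and the centering ODE for $\varrhoa$, take differentials of $\hNan_t=n^{-1/2}(\Nan_t-n\varrhoa_t)$, and verify that the drift terms combine to give $\diff\hNan_t=-\lambda\Top\Za_t\,\hNan_t\,\diff t+\diff\Man_t/\sqrt n$. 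Writing $\Man/\sqrt n$ for the scaled martingale, I would check the hypotheses of the martingale central limit theorem \citet[Theorem~VIII.3.11]{jacod2013limit}: the jumps satisfy $|\Delta(\Man_t/\sqrt n)|\le n^{-1/2}\to0$, and the predictable quadratic variation is $\langle\Man/\sqrt n\rangle_t=\tfrac1n\int_0^t\lambda\Top\Za_s(n-\Nan_s)\,\diff s=\int_0^t\lambda\Top\Za_s(1-\Nan_s/n)\,\diff s$. The key point is to show this converges, as $n\to\infty$ (with $\alpha$ and the path of $\Za$ held fixed), to $\int_0^t\lambda\Top\Za_s\exp(-\Lambdaa_s)\,\diff s=1-\exp(-\Lambdaa_t)$; this follows because, conditionally on $Z^\alpha$, the defaults are independent with survival probability $\exp(-\Lambdaa_s)$, so $\Nan_s/n\to 1-\exp(-\Lambdaa_s)$ by the conditional law of large numbers and dominated convergence. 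Since the limiting bracket is $\F^{Z^\alpha}$-measurable rather than deterministic, the limit $B^\alpha$ is a continuous martingale (Gaussian only conditionally on $Z^\alpha$) with $\langle B^\alpha\rangle_t=1-\exp(-\Lambdaa_t)$. The linear SDE \eqref{eq:na} is then solved by the integrating-factor substitution $\hXan_t=\exp(\Lambdaa_t)\hNan_t$, yielding $\diff\hXan_t=\exp(\Lambdaa_t)\,\diff\Man_t/\sqrt n$, and the continuous-mapping/stochastic-integral convergence transfers the weak limit of the martingale to $\hNan$.

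For the second statement, the process $\hNa$ solving \eqref{eq:na} is now the object being sent through the limit $\alpha\to\infty$. Here the driving mechanism is rapid switching of the background chain, so the central tool is the ergodic averaging result \eqref{eq:ergon}: scaling $Q\mapsto\alpha Q$ is equivalent to accelerating time, and as $\alpha\to\infty$ the occupation measure $\tfrac1t\int_0^t\Za_s\,\diff s$ concentrates at the stationary vector $\pi$. Consequently $\Lambdaa_t=\int_0^t\lambda\Top\Za_s\,\diff s\to(\lambda\Top\pi)\,t=\lambdai t$ almost surely (uniformly on compacts), so the quadratic variation $\langle B^\alpha\rangle_t=1-\exp(-\Lambdaa_t)$ converges to the deterministic $1-\exp(-\lambdai t)$. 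Applying the martingale central limit theorem a second time identifies the limit of $B^\alpha$ as a genuine Gaussian martingale $B$ with $\langle B\rangle_t=1-\exp(-\lambdai t)$; the determinism of the limiting bracket is exactly what upgrades the conditionally-Gaussian $B^\alpha$ to an (unconditionally) Gaussian limit. The drift coefficient $-\lambda\Top\Za_t$ in \eqref{eq:na} likewise averages to $-\lambdai$, and I would again pass through the explicit representation $\hXan$ (or rather its $\alpha$-analogue $\hXa_t=\exp(\Lambdaa_t)\hNa_t$, with $\diff\hXa_t=\exp(\Lambdaa_t)\,\diff B^\alpha_t$) so that the convergence of $\Lambdaa$ and of $B^\alpha$ feeds into a stochastic-integral convergence theorem, giving $\hXa\overset{d}{\to}\int_0^{\cdot}\exp(\lambdai s)\,\diff B_s$ and hence $\hNa\overset{d}{\to}\hN$ solving \eqref{eq:hn}.

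The main obstacle, and the place I would be most careful, is the passage to the limit in the stochastic integral term rather than the brackets. Convergence in distribution of the integrands (here $\exp(\Lambdaa_\cdot)$) together with convergence of the integrators ($B^\alpha$) does not automatically give convergence of the integrals $\int\exp(\Lambdaa_s)\,\diff B^\alpha_s$; one needs a joint-convergence/stability statement for stochastic integrals, such as a result on weak convergence of stochastic integrals along the lines of Kurtz–Protter or the semimartingale convergence framework of \citet{jacod2013limit}. I would secure this by establishing joint weak convergence of the pair $(\exp(\Lambdaa_\cdot),B^\alpha)$ to $(\exp(\lambdai\,\cdot),B)$ and verifying a predictable-uniform-tightness (or good-sequence) condition on the integrators, which holds here because the $B^\alpha$ have uniformly controlled brackets bounded by $1$. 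A secondary technical nuisance is that for the inner limit the bracket's limit is $\F^{Z^\alpha}$-measurable, so the first application of the martingale CLT must be understood conditionally on $Z^\alpha$ (or phrased via stable convergence) to correctly identify $B^\alpha$ as a conditionally Gaussian martingale; handling the two limits in sequence means I must make sure the conditional structure from the first step is compatible with the ergodic averaging in the second. Once joint convergence and the integral-stability condition are in place, identifying the limiting SDE \eqref{eq:hn} and its bracket is routine.
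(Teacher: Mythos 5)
Your proposal is correct and follows essentially the same route as the paper's proof: the inner limit via the martingale CLT with random (conditionally Gaussian) bracket, the integrating-factor representation $\exp(\Lambda^\alpha_t)\hat{N}^{n,\alpha}_t$, the ergodic theorem \eqref{eq:ergon} for the outer limit, and a predictable-uniform-tightness (P-UT) plus stochastic-integral convergence argument exactly as in \citet[VI.6.13, VI.6.22]{jacod2013limit}. The two technical concerns you flag---stability of the stochastic integrals and the conditional nature of the first CLT---are precisely the points the paper handles via the P-UT condition and the Liptser--Shiryaev functional CLT for martingales with random quadratic variation.
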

\begin{proof}
We modify the proof of Proposition~\ref{Easy CLT N Bin}. We first view the scaled martingale $\Man/\sqrt{n}$, with $\Man$ defined in \eqref{dNan}. As in the proof of Proposition~\ref{Easy CLT N Bin} we see that $\Delta \Man/\sqrt{n} \rightarrow 0$. Following the same arguments, we  find that for the quadratic variation we have the expression $\langle \Man \rangle_t = \int_0^t \lambdaa_s(n-\Nan_s) \diff s$. Hence for the scaled martingale it holds that 
\begin{align*}
\langle \frac{\Man}{\sqrt{n}} \rangle_t 
& = \int_0^t \lambdaa_s (1-\frac{\Nan_s}{n})\diff s \\
& \underset{(n \rightarrow \infty)}{\overset{a.s.}{\rightarrow}} \int_0^t \lambdaa_s\exp(-\Lambdaa_s)\diff s\\
& = 1-\exp(-\Lambdaa_t), 
\end{align*} 
where we have used dominated convergence and the conditional strong law of large numbers for the convergence $\frac{\Nan_s}{n}{\overset{a.s.}{\rightarrow}}\, \E[Y^1_s|\mathcal{F}^Z]=1-\exp(-\Lambdaa_s)$. It follows from the functional CLT for martingales with random quandratic variation to a conditional Gaussian martingale \citet[Theorem 4, p.567]{liptser2012theory} that $\Man$ converges to a continuous martingale $B^\alpha$ with $\langle B^\alpha\rangle_t=1-\exp(-\Lambdaa_t)$.

One easily derives that $\hNan$ is the solution to
\[
\diff\hNan_t=-\lambdaa_t\hNan_t\diff t + n^{-1/2}\diff\Man_t,
\]
implying that 
\[
\hNan_t=n^{-1/2}\exp(-\Lambdaa_t)\int_0^t\exp(\Lambdaa_s)\diff\Man_s.
\]
It follows from the above, the validity of the P-UT condition for martingales, \citet[VI.6.13]{jacod2013limit} and the weak convergence theorem for stochastic integrals, \citet[VI.6.22]{jacod2013limit} that $\hNan$ converges to a process $\hat{N}^\alpha$, given by 
\begin{equation}\label{eq:na1}
\hat{N}^\alpha_t=\exp(-\Lambdaa_t)\int_0^t\exp(\Lambdaa_s)\diff B^\alpha_s, 
\end{equation}
which is the solution to \eqref{eq:na}.

We next consider the convergence of $\hat{N}^\alpha$ for $\alpha\to\infty$. 
From the ergodic theorem for Markov chains (see \eqref{eq:ergon}), we obtain, for $\alpha\to\infty$, $\int_0^t Z^\alpha_s\diff s \stackrel{a.s.}{\to}\pi t$, and hence $\Lambdaa_t\to\lambdai t$ and $\exp(\Lambdaa_t)\to\exp(\lambdai t)$ a.s.
As these processes are increasing and the limit is continuous we can apply \citet[Thm VI.2.15(c)]{jacod2013limit} to find that this convergence is uniform on compact sets,  
\begin{equation}\label{eq:ulam}
\underset{s\leq T}{\sup}|\exp(\Lambdaa_s)-\exp(\lambdai s)|\stackrel{\PP}{\rightarrow} 0 \quad \textrm{as } n \rightarrow \infty.
\end{equation}
Furthermore, we also obtain $\langle B^\alpha\rangle_t\stackrel{\PP}{\to} 1-\exp(-\lambdai t)$. Hence by the CLT for martingales again, we have the weak convergence of $B^\alpha$ to a continuous martingale $B$ with $\langle B\rangle_t = 1-\exp(-\lambdai t)$. 
By the same arguments as above, we have
that the stochastic integral process in \eqref{eq:na1} converges to $\int_0^\cdot\exp(\lambdai s)\diff B_s$, and therefore $\hat{N}^\alpha$ converges to the process $\hat N$ given by $\hat N_t=\exp(-\lambdai t)\int_0^t\exp(\lambdai s)\diff B_s$, which is the solution to \eqref{eq:hn}.
\end{proof}

\begin{theorem}\label{MMBin Hoofdstelling 2}
With the assumptions and notation of \autoref{MMBin hoofdstelling} we have, for $\alpha\to\infty$ that 
the counting processes $\Nan$ converge to the counting process $\Nn$ whose submartingale decomposition  is
\begin{equation*}
\diff \Nn_t = \lambdai (n- \Nn_t) \diff t + \diff M^n_t, \quad \Nn_0=0.
\end{equation*}
Equivalently, 
the centered processes $\hNan$ converge weakly to $\hNn$ defined as the solution of the SDE
\begin{equation*}
\diff \hNn_t = -\lambdai  \hNn_t \diff t + \diff \hat M^n_t, \quad \hN_0=0,
\end{equation*}
where $\hat M^n=n^{-1/2}M^n$.

Furthermore, we have that the process $\hNn$ converges weakly to $\hN$ defined as the solution of the SDE
\begin{equation*}
\diff \hN_t = -\lambdai \hN_t \diff t + \diff B_t, \quad \hN_0=0,
\end{equation*}
where $B$ is a continuous Gaussian martingale with $\langle B \rangle_t = 1-\exp(-\lambdai t)$.

\end{theorem}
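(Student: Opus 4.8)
The plan is to separate the statement into the two limits it contains, which are of a completely different nature. The second convergence, $\hNn \Rightarrow \hN$ as $n\to\infty$, needs no new argument. Indeed the limit process $\Nn$ produced by the first part has submartingale decomposition $\diff\Nn_t=\lambdai(n-\Nn_t)\diff t+\diff\Mn_t$, which is precisely \eqref{dN Bin} with the constant rate $\lambda$ replaced by $\lambdai=\lambda\Top\pi$. Hence $\Nn$ is an honest binomial counting process, and Proposition~\ref{Easy CLT N Bin}, read with $\lambda:=\lambdai$, yields both the limiting SDE for $\hN$ and the bracket $\langle B\rangle_t=1-\exp(-\lambdai t)$ word for word. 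The whole substance therefore lies in the first assertion: for \emph{fixed} $n$, the modulated counting process $\Nan$ of \eqref{dNan} converges weakly, as $\alpha\to\infty$, to the non-modulated process $\Nn$ driven by the \emph{averaged} constant rate $\lambdai$.

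The mechanism is homogenisation: rapid switching forces the random rate $\lambdaa_s=\lambda\Top\Za_s$ to be felt only through its stationary mean $\lambdai$, and I would quantify this through the deviation matrix. Writing the scaled chain as $\diff\Za_t=\alpha Q\Za_t\diff t+\diff\tilde M^\alpha_t$ and using $DQ=\Pi-I$ together with $\Pi\Za_t=\pi$ (from \eqref{eq:D} and $\one\Top\Za_t=1$), one obtains the exact identity
\[
\int_0^t(\Za_s-\pi)\diff s=\frac1\alpha\big[D(\Za_0-\Za_t)+D\tilde M^\alpha_t\big].
\]
The boundary term is $O(1/\alpha)$ uniformly in $t$, while by \eqref{eq:qvarm} applied with $Q\mapsto\alpha Q$ the bracket $\langle\tilde M^\alpha\rangle_t$ grows linearly in $\alpha$, so $\alpha^{-1}D\tilde M^\alpha$ has quadratic variation $O(t/\alpha)$ and Doob's inequality gives $\sup_{s\le T}\big|\int_0^s(\Za_u-\pi)\diff u\big|\stackrel{\pp}{\to}0$.

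Next I would transfer this averaging into the drift of \eqref{dNan}. The point to watch is that the factor $n-\Nan_s$ multiplying $\lambda\Top\Za_s$ is itself $\alpha$-dependent, so the plain ergodic theorem \eqref{eq:ergon} is not enough. Instead, with $h^\alpha_s:=\int_0^s(\Za_u-\pi)\diff u$ and the observation that $s\mapsto n-\Nan_s$ has finite variation with total variation $\Nan_t\le n$, integration by parts gives $\big|\int_0^t\lambda\Top(\Za_s-\pi)(n-\Nan_s)\diff s\big|\le 2n\|\lambda\|\sup_{s\le t}|h^\alpha_s|\stackrel{\pp}{\to}0$, and the same bound holds with $n-\Nan_s$ replaced by any bounded functional of $\Nan$ having at most $n$ jumps. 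Tightness of $\{\Nan\}_\alpha$ is immediate, as these are counting processes with unit jumps and intensity bounded by $n\max_i\lambda_i$ uniformly in $\alpha$ (Aldous--Rebolledo, or the semimartingale criteria of \citet{jacod2013limit}). For any $f$ on $\{0,\dots,n\}$ the process $f(\Nan_t)-f(0)-\int_0^t\lambda\Top\Za_s\,\psi(\Nan_s)\diff s$, with $\psi(k):=(n-k)[f(k+1)-f(k)]$, is a martingale; splitting $\lambda\Top\Za_s=\lambdai+\lambda\Top(\Za_s-\pi)$, the fluctuation $\int_0^t\lambda\Top(\Za_s-\pi)\psi(\Nan_s)\diff s$ vanishes by the integration-by-parts bound ($\psi(\Nan)$ again has at most $n$ jumps), while the principal part $\int_0^t\lambdai\psi(\Nan_s)\diff s$ converges, along a.s.\ Skorokhod realisations, to $\int_0^t\lambdai\psi(N_s)\diff s$ by dominated convergence. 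Every weak limit point $N$ therefore solves the martingale problem for $\mathcal Lf(k)=\lambdai(n-k)[f(k+1)-f(k)]$, which is well posed (a bounded-rate pure-birth chain on the finite set $\{0,\dots,n\}$), so the limit is uniquely $\Nn$ and $\Nan\Rightarrow\Nn$. Since $n$ is fixed and $\varrhoa_t=1-\exp(-\Lambdaa_t)\to1-\exp(-\lambdai t)$ uniformly on compacts exactly as in \eqref{eq:ulam}, the continuous mapping theorem applied to the joint limit of $(\Nan,\varrhoa)$ upgrades this to $\hNan=n^{-1/2}(\Nan-n\varrhoa)\Rightarrow n^{-1/2}(\Nn-n\varrho)=\hNn$.

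I expect the averaging of the drift to be the only genuine obstacle. The difficulty is the coupling of the fast chain $\Za$ with the slow, $\alpha$-dependent occupation $\Nan$: one cannot simply replace $\lambda\Top\Za_s$ by $\lambdai$ under the integral, and the clean way around this is the finite-variation integration by parts combined with the $1/\alpha$ deviation-matrix bound above. Tightness on a finite state space and well-posedness of the limiting pure-birth martingale problem are routine, and the passage $n\to\infty$ is entirely inherited from Proposition~\ref{Easy CLT N Bin}.
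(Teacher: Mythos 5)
Your proof is correct, and its two halves relate to the paper's proof in very different ways. For the second assertion you do exactly what the paper does: observe that the limit $\Nn$ has the submartingale decomposition of \eqref{dN Bin} with rate $\lambdai$, so Proposition~\ref{Easy CLT N Bin} applies verbatim. For the first assertion, however, the paper offers no argument at all --- it simply cites Corollary~2 of \citet{mandjes2016explicit} for the convergence $\Nan\Rightarrow\Nn$ as $\alpha\to\infty$ --- whereas you supply a self-contained homogenisation proof: the exact identity $\int_0^t(\Za_s-\pi)\diff s=\alpha^{-1}\left[D(\Za_0-\Za_t)+D\tilde M^\alpha_t\right]$ obtained from \eqref{dZ} and $DQ=\Pi-I$, Doob's inequality with \eqref{eq:qvarm} to get $\sup_{s\leq T}\left|\int_0^s(\Za_u-\pi)\diff u\right|\stackrel{\pp}{\to}0$, a finite-variation integration by parts to decouple the fast chain from the $\alpha$-dependent factor $n-\Nan_s$ (you correctly note that the plain ergodic theorem \eqref{eq:ergon} does not suffice because of this coupling), tightness for counting processes with uniformly bounded intensity, and identification of limit points through the well-posed martingale problem of the pure-birth chain with rates $\lambdai(n-k)$; the final upgrade to $\hNan\Rightarrow\hNn$ via ucp convergence of $\varrhoa$ and continuity of addition at continuous limits is also sound. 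What each approach buys: the paper's citation is economical and leverages a result proved elsewhere, but leaves the theorem dependent on an external reference; your argument makes the statement self-contained and uses only machinery the paper has already set up in Section~\ref{section:Z}, with the deviation-matrix identity playing the same role here that $DQ=\Pi-I$ plays in Steps~1 and~3 of the proof of Theorem~\ref{MMbin Maintheorem}, so it integrates naturally with the rest of the paper at the cost of roughly a page of extra work.
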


\begin{proof}
The first assertion is shown in for instance the recent reference \citet{mandjes2016explicit}, Corollary~2.
For the second step we find ourselves in the situation of Proposition \ref{Easy CLT N Bin}, and if we apply this result the proof is complete.
\end{proof}

\begin{remark}
It is striking that Theorems \ref{MMBin hoofdstelling} and \ref{MMBin Hoofdstelling 2} tell that the order in which the limits are taken are taken (first $n\to\infty$, then $\alpha\to\infty$ or vice versa) give the same limit for $\hNan$. It is a priori not guaranteed that in the two situations the same limit results. Moreover, below we will investigate what happens if $\alpha$ and $n$ jointly tend to infinity, see e.g.\ Theorems~\ref{MMbin Maintheorem}, \ref{thm:mainthm2} and Proposition~\ref{prop:beta} where different limits will appear. Three different scenarios will be investigated, namely $\alpha$ tends faster to infinity than $n$, the converse situation, and the balanced case, where the speeds to convergence are proportional, and in the latter case without loss of generality equal.
\end{remark}
Up to now, we investigated limit behaviour, where  limits have been taken in specified order.
We  continue with the case where $\alpha$ and $n$ jointly tend to infinity. First we do this when this happens at the same rate for both of them, w.l.o.g.\ we take them equal, $\alpha = n$, implying the scaling $Q \mapsto nQ$. We will take the asymptotic centering process $\varrho$, similar to the one in \eqref{eq:rho1}.  We find this process by defining $\varrho_t := \lim_{n \rightarrow \infty} \frac{1}{n} \E \Nn_t= 1-\exp(-\lambdai t)$. A differential equation for $\varrho$ is given by 
\begin{equation}\label{MM Bin Asymp varrho}
\dot\varrho_t = \lambdai(1-\varrho_t), \quad \varrho_0 = 0.
\end{equation}
In this notation we have in analogy to \eqref{dN Bin MM} that the process $\Nn$ is given by
\begin{equation}\label{MM Bin dNn}
\diff \Nn_t = \lambda\Top \Zn_t(n-\Nn_t)\diff t + \diff \Mn_t, \quad \Nn_0 = 0.
\end{equation}
In the proof of Theorem~\ref{MMbin Maintheorem} the following lemma turns out to be useful, of which we shall also use a stochastic version.

\begin{lemma}\label{lemma:muf}
Consider a measurable space $(\Omega,\mathcal{F})$. Let $(\mu_n)$ be a sequence of signed measures, such that the total variations $||\mu_n||$ are bounded by a constant $B$ and that are converging weakly to a signed measure $\mu$, and let $(f_n)$ be a sequence of measurable functions, converging uniformly to $f$. Then the integrals $\mu_n(f_n)$ converge to $\mu(f)$.
\end{lemma}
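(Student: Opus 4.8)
The plan is to prove the convergence through the elementary decomposition
\[
\mu_n(f_n) - \mu(f) = \mu_n(f_n - f) + \bigl(\mu_n(f) - \mu(f)\bigr),
\]
and to show that each of the two terms on the right tends to $0$ as $n\to\infty$. The first term will be controlled by combining the uniform bound on the total variations with the uniform convergence $f_n\to f$, while the second is precisely a statement about the weak convergence $\mu_n\to\mu$ evaluated against the single fixed limit function $f$.

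For the first term I would use that integration against a signed measure is dominated by integration against its total variation measure $|\mu_n|$, which gives
\[
|\mu_n(f_n - f)| \le \int_\Omega |f_n - f|\,\diff|\mu_n| \le \|f_n - f\|_\infty\,|\mu_n|(\Omega) = \|f_n - f\|_\infty\,\|\mu_n\| \le B\,\|f_n - f\|_\infty.
\]
Since $\|f_n - f\|_\infty\to 0$ by uniform convergence, this term vanishes. This is exactly where both hypotheses enter: without the uniform bound $B$ on $\|\mu_n\|$ the product need not be small, and merely pointwise (rather than uniform) convergence of the $f_n$ would not suffice to make the integral small.

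It remains to show $\mu_n(f)-\mu(f)\to 0$, and here lies the only real obstacle: since $f$ is just the uniform limit of measurable functions it is measurable but need not be continuous, so one must verify that $f$ is an admissible test function for the notion of weak convergence in force. If weak convergence is taken in the setwise sense, $\mu_n(A)\to\mu(A)$ for every $A\in\mathcal{F}$, I would close the gap by approximation: choose a simple function $s$ with $\|f - s\|_\infty < \varepsilon$, observe that $\mu_n(s)\to\mu(s)$ follows by linearity from the convergence on the finitely many sets defining $s$, and bound $|\mu_n(f-s)|\le B\varepsilon$ and $|\mu(f-s)|\le\|\mu\|\,\varepsilon$ using the total-variation bounds. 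This yields $\limsup_n|\mu_n(f)-\mu(f)|\le(B+\|\mu\|)\varepsilon$, and letting $\varepsilon\downarrow 0$ finishes the term; if instead weak convergence already tests against all bounded measurable functions, this step is immediate. Substituting both vanishing terms into the displayed decomposition then gives $\mu_n(f_n)\to\mu(f)$.
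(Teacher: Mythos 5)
Your proposal is correct and follows essentially the same route as the paper: the identical two-term decomposition $|\mu_n(f_n)-\mu(f)|\leq|\mu_n(f_n)-\mu_n(f)|+|\mu_n(f)-\mu(f)|$, with the first term bounded by $B\,\|f_n-f\|_\infty$ via the total-variation hypothesis. The only difference is that you also justify $\mu_n(f)\to\mu(f)$ by simple-function approximation under a setwise reading of weak convergence, a point the paper simply asserts ``by the assumptions made''; this extra care is warranted, since on an abstract measurable space $f$ need not be continuous and the admissible test functions are not pinned down by the lemma's statement.
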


\begin{proof}
Consider the inequalities
\begin{align*}
|\mu_n(f_n)-\mu(f)| 
%& \leq |\mu_n(f_n)-\mu(f_n)|+|\mu(f_n)-\mu(f)|\\
& \leq |\mu_n(f_n)-\mu_n(f)|+|\mu_n(f)-\mu(f)|\\
& \leq ||f_n-f||_\infty ||\mu_n||+|\mu_n(f)-\mu(f)|\\
& \leq B||f_n-f||_\infty+|\mu_n(f)-\mu(f)|.
\end{align*}
By the assumptions made, both terms on the right in the above display tend to zero.
\end{proof}

\begin{remark}
Lemma~\ref{lemma:muf} also has a stochastic version. If the functions $f,f_n$ are random variables, the measures $\mu$ and $\mu_n$ are random as well (measurable in an appropriate way), the conclusion of the lemma under evidently modified conditions holds `$\omega$-wise, i.e.\ almost surely.
\end{remark}

\begin{theorem}\label{MMbin Maintheorem}
Let $\Nn$ be given by \eqref{MM Bin dNn} and $\varrho$ by \eqref{MM Bin Asymp varrho}. Then the scaled and centered process $\hNn$ given by 
\begin{equation*}
\hNn_t:=n^{-1/2}(\Nn_t-n\varrho_t),
\end{equation*} converges weakly (as $n \rightarrow \infty$) to the solution of the following SDE 
\begin{equation}\label{4.6 final SDE}
\diff \hN_t = -\lambdai \hN_t \diff t + \diff B_t + \diff G_t, \quad \hN_0 = 0,
\end{equation} where $G$ is a Gaussian martingale with %$$\langle G \rangle_t =: V_t = \int_0^t (\lambda(1-\varrho_s))\Top (\ediag\{\pi\}D+D\Top\ediag\{\pi\})(\lambda(1-\varrho_s)) \diff s, 
%$$
\[
\langle G\rangle_t=\frac{V}{2\lambdai}(1-\exp(-2\lambdai t)),
\]
with $V=\lambda\Top (\ediag\{\pi\}D\Top+D\ediag\{\pi\})\lambda$ ($D$ is the deviation matrix of the background Markov chain),
and $B$ is a Gaussian martingale with $\langle B \rangle_t = 1-\exp(-\lambdai t)$, independent of $G$.
\end{theorem}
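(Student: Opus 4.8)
The plan is to adapt the proof of \autoref{MMBin hoofdstelling}, but now tracking the extra fluctuation generated by the \emph{rapidly} switching chain (the scaling is $Q\mapsto nQ$). Writing $\Nn_t = n\varrho_t + n^{1/2}\hNn_t$ in \eqref{MM Bin dNn}, differentiating $\hNn_t=n^{-1/2}(\Nn_t-n\varrho_t)$ and using $\dot\varrho_t=\lambdai(1-\varrho_t)$, one obtains
\begin{equation*}
\diff\hNn_t = -(\lambda\Top\Zn_t)\hNn_t\diff t + n^{1/2}(\lambda\Top\Zn_t-\lambdai)(1-\varrho_t)\diff t + n^{-1/2}\diff\Mn_t.
\end{equation*}
The first and third terms are exactly the analogues of those in \autoref{MMBin hoofdstelling}; the genuinely new object is the middle drift, which I abbreviate $\diff G^n_t:=n^{1/2}\lambda\Top(\Zn_t-\pi)(1-\varrho_t)\diff t$ (note $\lambda\Top\Zn_t-\lambdai=\lambda\Top(\Zn_t-\pi)$ since $\lambdai=\lambda\Top\pi$). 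Because of the $n^{1/2}$ prefactor this term does not vanish, and identifying its limit is the heart of the proof.

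To analyse $G^n$ I would use the deviation-matrix (Poisson-equation) trick. Since $\one\Top\Zn_t=1$ we have $\Zn_t-\pi=(I-\Pi)\Zn_t=-DQ\,\Zn_t$ by \eqref{eq:D}, while the accelerated background SDE \eqref{dZ} reads $\diff\Zn_t=nQ\Zn_t\diff t+\diff\tilde M^n_t$ for its square-integrable martingale $\tilde M^n$. Left-multiplying by $D$ and using $DQ=\Pi-I$ gives $\diff(D\Zn_t)=-n(\Zn_t-\pi)\diff t+D\diff\tilde M^n_t$, hence
\begin{equation*}
n^{1/2}\lambda\Top(\Zn_t-\pi)\diff t = n^{-1/2}\lambda\Top\big(D\diff\tilde M^n_t-\diff(D\Zn_t)\big).
\end{equation*}
Substituting this into $G^n$ and integrating by parts in the $\diff(D\Zn_t)$ contribution (the weight $(1-\varrho_s)$ is smooth, and $\Lambda^n_t:=\int_0^t\lambda\Top\Zn_s\diff s$ is bounded on compacts uniformly in $n$), all non-martingale pieces are uniformly bounded on compacts and carry a factor $n^{-1/2}$; thus $G^n=\hat G^n+o_{\PP}(1)$ uniformly on compacts, where $\hat G^n_t:=n^{-1/2}\int_0^t(1-\varrho_s)\lambda\Top D\diff\tilde M^n_s$ is a martingale.

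Next I would compute the brackets. Using \eqref{eq:qvarm} with $Q\mapsto nQ$ gives $\langle\tilde M^n\rangle_t=n\int_0^t\Sigma(\Zn_s)\diff s$ with $\Sigma(z)=\diag\{Qz\}-Q\diag\{z\}-\diag\{z\}Q\Top$, so the factors of $n$ cancel and
\begin{equation*}
\langle\hat G^n\rangle_t=\int_0^t(1-\varrho_s)^2\,\lambda\Top D\,\Sigma(\Zn_s)\,D\Top\lambda\,\diff s.
\end{equation*}
By the ergodic theorem \eqref{eq:ergon} the occupation measures of $\Zn$ converge, and applying the stochastic version of \autoref{lemma:muf} with the continuous weight $(1-\varrho_s)^2=\exp(-2\lambdai s)$, this converges to $\int_0^t\exp(-2\lambdai s)\,\lambda\Top D\bar\Sigma D\Top\lambda\,\diff s$ with $\bar\Sigma=\sum_j\pi_j\Sigma(e_j)$. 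A short computation (using $Q\pi=0$, then $DQ=\Pi-I$, $D\pi=0$ and $\Pi\diag\{\pi\}=\pi\pi\Top$) gives $\bar\Sigma=-(Q\diag\{\pi\}+\diag\{\pi\}Q\Top)$ and the key identity $\lambda\Top D\bar\Sigma D\Top\lambda=2\,\lambda\Top\diag\{\pi\}D\Top\lambda=V$, whence $\langle\hat G^n\rangle_t\to\frac{V}{2\lambdai}(1-\exp(-2\lambdai t))$; nonnegativity of $V$ is \eqref{eq:dpi}. For the counting martingale $\hat M^n:=n^{-1/2}\Mn$ I repeat the bracket computation of \autoref{MMBin hoofdstelling}: the integrand telescopes to $-\tfrac{d}{ds}\exp(-\Lambda^n_s)$, so together with $\Lambda^n_t\to\lambdai t$ one gets $\langle\hat M^n\rangle_t\to1-\exp(-\lambdai t)$. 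Crucially, \autoref{assumption1} forces $\langle\Mn,\tilde M^n\rangle=0$ (no common jumps), hence $\langle\hat M^n,\hat G^n\rangle=0$, so the jointly Gaussian limits $B$ and $G$ have zero covariation and are therefore independent.

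Finally I would assemble the pieces as in \autoref{MMBin hoofdstelling}. Since $\hNn$ solves the linear SDE with coefficient $-(\lambda\Top\Zn_t)$ driven by $\hat M^n+\hat G^n+R^n$ (with $R^n\to0$ uniformly), its explicit solution is $\hNn_t=\exp(-\Lambda^n_t)\int_0^t\exp(\Lambda^n_s)\diff(\hat M^n_s+\hat G^n_s+R^n_s)$. The ergodic theorem yields $\Lambda^n_t\to\lambdai t$ uniformly on compacts via \citet[Thm VI.2.15(c)]{jacod2013limit}, the martingale CLT \citet[Theorem~VIII.3.11]{jacod2013limit} (vector form, with the brackets above) gives $(\hat M^n,\hat G^n)\Rightarrow(B,G)$ with $B\perp G$, and the P-UT condition \citet[VI.6.13]{jacod2013limit} together with the stochastic-integral convergence theorem \citet[VI.6.22]{jacod2013limit} then gives $\hNn\Rightarrow\exp(-\lambdai t)\int_0^{\cdot}\exp(\lambdai s)\diff(B_s+G_s)$, which is precisely the solution of \eqref{4.6 final SDE}. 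I expect the main obstacle to be the fluctuation term $G^n$: recognising that the $n^{1/2}$-scaled deviation of the fast chain from stationarity survives in the limit, representing it as the martingale $\hat G^n$ through the deviation matrix, and verifying that its limiting bracket collapses to the stated $V$. The remaining work is bookkeeping: ensuring the $o_{\PP}(1)$ remainders are genuinely uniform, and that the joint (not merely marginal) convergence required for the stochastic-integral limit holds.
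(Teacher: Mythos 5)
Your proposal is correct and follows essentially the same route as the paper's own proof: the same deviation-matrix (Poisson-equation) trick converting the $n^{1/2}$-scaled drift fluctuation $n^{1/2}\lambda\Top(\Zn_t-\pi)(1-\varrho_t)\diff t$ into the martingale $\hat G^n$ plus a vanishing $\diff(D\Zn)$ contribution, the same bracket computations producing $V$ and $1-e^{-\lambdai t}$, the same appeal to Assumption~\ref{assumption1} to get a diagonal limiting bracket and hence independence of $B$ and $G$, and the same assembly via variation of constants, the ergodic theorem, the martingale CLT and P-UT/stochastic-integral convergence. The only (immaterial) differences are bookkeeping: you substitute the identity $n(\Zn_t-\pi)\diff t = D\,\diff\tilde M^n_t-\diff(D\Zn_t)$ \emph{before} solving the linear SDE, so your $\hat G^n$ carries the weight $(1-\varrho_s)$ and its bracket converges directly to $\frac{V}{2\lambdai}(1-e^{-2\lambdai t})$, whereas the paper substitutes inside the explicit solution (its weight $\Psi^n_s$ also contains $e^{\lambda\Top\zeta^n_s}$, giving limiting bracket $Vt$); and where you say the counting-martingale bracket computation "repeats" Theorem~\ref{MMBin hoofdstelling}, note that with $\alpha=n$ the conditional SLLN cannot be invoked verbatim along the triangular array, which is why the paper replaces it by the short $L^2$/variance argument (an equally short conditional Chebyshev bound would also do).
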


\begin{proof} We divide the proof into a number of steps.

\emph{Step 1.} We begin by rewriting the expression for $\hNn$. We have
\begin{equation}\label{eq: basis SIE}
\hNn_t = e^{-\lambda\Top \zeta^n_t} \left( \int_0^t e^{\lambda\Top \zeta^n_s} n^{1/2} \lambda\Top (\Zn_s-\pi)(1-\varrho_s)\diff s + \int_0^t e^{\lambda\Top \zeta^n_s} \diff n^{-1/2} \Mn_s \right),
\end{equation} where $\zeta^n_s = \int_0^s \Zn_u \diff u$. Now consider the process 
\begin{equation*}
\Xn_t = \int_0^t e^{\lambda\Top \zeta^n_s} n^{1/2} \lambda\Top (\Zn_s-\pi)(1-\varrho_s)\diff s + \int_0^t e^{\lambda\Top \zeta^n_s} \diff n^{-1/2} \Mn_s.
\end{equation*} Define $\Psi^n_s = \exp(\lambda\Top \zeta^n_s) (1-\varrho_s)\lambda\Top D$ and recall from \eqref{dZ} that $\Zn_t - \Zn_0 - n\int_0^t Q \Zn_s \diff s = \tilde{M}^n_t$ for a martingale $\tilde M^n$. From~\eqref{eq:D} we obtain $DQ = \Pi -I$, $\Pi\Zn_t = \pi$. Hence, we can write
\begin{equation}\label{eq: 2nd SIE}
\Xn_t = - \int_0^t \Psi^n_s \diff n^{-1/2} \Zn_s + \int_0^t \Psi^n_s \diff n^{-1/2} \tilde{M}^n_s + \int_0^t e^{\lambda\Top \zeta^n_s} \diff n^{-1/2} \Mn_s.
\end{equation}

\emph{Step 2.} 
In order to prove weak convergence of the process in \eqref{eq: basis SIE} we prove  joint weak convergence of \eqref{eq: 2nd SIE} and $e^{-\lambda\Top \zeta^n_t}$. 
By using the same reasoning as in the proof of \autoref{MMBin hoofdstelling} in order to arrive at \eqref{eq:ulam}, we have $\exp(\lambda\Top \zeta^n_t) \overset{ucp}{\rightarrow} \exp(\lambdai t)$ and the u.c.p.\ convergence of $\exp(-\lambda\Top \zeta^n_t)$ to $\exp(-\lambdai t)$.

\emph{Step 3.} In order to establish weak convergence of \eqref{eq: 2nd SIE} we establish joint weak convergence of the terms. We begin with showing that the first term converges weakly to the zero process.
Using the result from Step 2 and $1-\varrho_s=\exp(-\lambdai s)$, we get from the continuous mapping theorem that
\begin{equation}\label{eq:psiucp}
    \Psi^n \overset{ucp}{\rightarrow} \Psi := \lambda\Top D.
\end{equation}
We have that the processes $t\mapsto\exp(\lambda\Top \zeta^n_t)$ are of bounded variation on compact intervals,  uniformly in $n$. Therefore, as $t \mapsto e^t$ is Lipschitz on compact sets and $\varrho_s$ is of bounded variation, we also have that the $\Psi^n$ are of bounded variation and have bounded total variation processes on compact sets uniformly in $n$.
It follows that $n^{-1/2} \Psi^n$ converges u.c.p.\ to the zero process and so does its total variation process. To analyze the integral $\int_0^t \Psi^n_s \diff  \Zn_s$ we make the following observation, derived from~\citet{jansen2018scaling}. Every component $Z^{n,i}$ of the process $Z^n$ takes values in $\{0,1\}$, and hence $\Delta Z^{n,i}_s\in\{-1,+1\}$. Therefore the  integral $\int_0^t \Psi^n_s \diff  Z^{n,i}_s$ is of the form $\sum_{\tau_i\leq t} \pm\Psi^n_{\tau_i}$, where the $\tau_i$ are the jump times of $Z^{n,i}$. Hence $|\int_0^t \Psi^n_s \diff  Z^{n,i}_s|$  is bounded from above by the sum of the total variation of $Z^{n,i}$ and its sup-norm, see ~\citet[Lemma~6.6.5]{jansen2018scaling}.
Therefore we have for the first term of \eqref{eq: 2nd SIE}, that 
\begin{equation*}
     \int_0^t \Psi^n_s \diff n^{-1/2} \Zn_s \overset{ucp}{\rightarrow} 0.
\end{equation*} 
By Slutsky's theorem, one obtains joint convergence of the three terms in \eqref{eq: 2nd SIE}, as soon as the final two terms jointly converge weakly. This we show in the next step.

\emph{Step 4.} For the weak convergence of the remaining two terms of \eqref{eq: 2nd SIE} consider the locally square-integrable martingale 
\begin{equation*}
    \mathbf{M}^n_t=\begin{bmatrix}
    n^{-1/2}\tilde{M}^n_t \\
    n^{-1/2}\Mn_t
    \end{bmatrix}
\end{equation*} which by Assumption~\ref{assumption1} has quadratic variation given by
\begin{equation*}
    \langle\mathbf{M}^n \rangle_t
    = \int_0^t \begin{bmatrix}
    \tilde{V}^{n,*}_s & 0 \\
    0 & V^{n,*}_s 
    \end{bmatrix} \diff s,
\end{equation*} where 
\begin{align*}
    &  \tilde{V}^{n,*}_s = \diag\{Q\Zn_s\} - Q\diag\{\Zn_s\} - \diag\{\Zn_s\}Q\Top, \\
    & V^{n,*}_s = \lambda\Top \Zn_s (1 - n^{-1} \Nn_s)
\end{align*} because of \eqref{eq:qvarm}.
Therefore, by following \citet[section III.6a]{jacod2013limit}, the square integrable martingale
\begin{equation}\label{4.6 2d martingale}
    \mathbf{M}^{\zeta,n}_t=\begin{bmatrix}
    \int_0^t \Psi^n_s \diff n^{-1/2}\tilde{M}^n_s \\
    \int_0^t e^{\lambda\Top \zeta^n_s} \diff n^{-1/2} \Mn_s
    \end{bmatrix}  
\end{equation} has quadratic variation 
\begin{align}
    \langle\mathbf{M}^{\zeta,n} \rangle_t & =  \int_0^t \begin{bmatrix}
    \Psi^n_s \tilde{V}^{n,*}_s (\Psi^n_s)\Top & 0 \\
    0 & \exp(2\lambda\Top \zeta^n_s) V^{n,*}_s 
    \end{bmatrix} \diff s.\label{eq:qvm}
\end{align}
In order to prove weak convergence of the last two terms in \eqref{eq: 2nd SIE} we aim to apply the MCLT to the martingale $\mathbf{M}^{\zeta,n}$ in \eqref{4.6 2d martingale}. Thereto we need that (i) the jumps of the martingale on compact sets disappear and that (ii) the quadratic variation converges to a deterministic continuous function in probability.

For (i) the proof is that both integrals in \eqref{4.6 2d martingale} have continuous integrands. Therefore the stochastic integral with respect to $\Mn$ ($\tilde{M}^n$ can be treated in the same way) we have 
\begin{equation*}
    \max_{0 \leq t \leq T} \left\{ |\Delta \int_0^t \Psi^n_s \diff n^{-1/2} \Mn_s|\right\} \leq \lVert \Psi^n \rVert_\infty  n^{-1/2} \rightarrow 0,
\end{equation*} where $\lVert \Psi^n \rVert_\infty$ denotes the supremum-norm of $\Psi^n$ on $[0,T]$ which is finite as $\Psi^n$ is bounded on compact intervals uniformly in $n$. 
For (ii) we check the convergence of the two non-zero entries in the quadratic variation separately. 

First we consider $\int_0^t \Psi^n_s \tilde{V}^{n,*}_s (\Psi^n_s)\Top \diff s$. We know that $\Psi^n$ converges u.c.p.\ to $\Psi$, see \eqref{eq:psiucp}, and from \eqref{eq:ergon} that $\int_0^t \Zn_s \diff s$ converges a.s.\ to $\pi t$. Below we apply the almost sure version of Lemma~\ref{lemma:muf} to  the elements of the matrix $\int_0^t \Psi^n_s \tilde{V}^{n,*}_s (\Psi^n_s)\Top \diff s$. 
Take the $ij$-th element of this matrix. It is, in obvious notation, a sum over $k$ and $l$ of integrals $\int_0^t (\Psi^n_s)_{ik} (\tilde{V}^{n,*}_s)_{kl} (\Psi^n_s)_{jl} \diff s$, where those integrals are of the form $\int_0^t (\Psi^n_s)_{ik}(\Psi^n_s)_{jl} \mu_{kl}(\diff s)$, with   $\mu_{kl}(\diff s)=(\tilde{V}^{n,*}_s)_{kl} \diff s$. Using that $\int_0^t(\tilde{V}^{n,*}_s) \diff s\to (\diag\{\pi\}D\Top+D\diag\{\pi\})\, t$, we see that an application of Lemma~\ref{lemma:muf} results in\begin{align}
    & \int_0^t \Psi^n_s \left(\diag\{Q\Zn_s\} - Q\diag\{\Zn_s\} - \diag\{\Zn_s\}Q\Top\right) (\Psi^n_s)\Top \diff s \label{eq:psi2}\\
     & \overset{ucp}{\rightarrow}  - \int_0^t \Psi_s (Q\diag\{\pi\}+\diag\{\pi\}Q\Top) \Psi_s\Top \diff s \nonumber \\
     & = \lambda\Top (\diag\{\pi\}D\Top+D\diag\{\pi\})\lambda\, t=: V\,t,\nonumber
\end{align} 
since $DQ = \Pi -I$ and $Q\pi = 0$, see \eqref{eq:D}. Note that $V$ is nonnegative in view of \eqref{eq:dpi}.

Next we consider the other non-zero entry in the quadratic variation. Recall $V^{n,*}_t= \lambda\Top \Zn_t (1 - n^{-1} \Nn_t)$. We first show u.c.p.\ convergence of $\int_0^t  V^{n,*}_s \diff s$ to a continuous function. This requires a couple of steps.

The first step is to show that $n^{-1}\Nn_t$ converges in probability to $\varrho_t=1-\exp(\lambdai t)$, as a matter of fact we show that the convergence is in the $L^2$-sense. Recall that, conditional on  $\cF^Z$, $N^n_t$ is binomial with parameters $n$ and $p^n_t=1-\exp(-\int_0^t \lambda\Top\Zn_s\diff s)$. Therefore we have $\E(n^{-1}\Nn_t-\varrho_t) = \E p^n_t-\varrho_t\to 0$. Hence, writing $\E (n^{-1}\Nn_t-\varrho_t)^2=(\E(n^{-1}\Nn_t-\varrho_t))^2+ \Var(n^{-1}\Nn_t)$, we only have to prove that the variance tends to zero.
By the law of total variation we have
\begin{align*}
    \Var(n^{-1}\Nn_t) & = n^{-2}\E\,\Var(N^n_t | \cF^Z) +\Var(\E[n^{-1}N^n_t | \cF^Z])\\ 
    & = n^{-1} \E[p^n_1(1-p^n_t)]+\Var\, p^n_t.
\end{align*} 
As the $p^n_t$ are bounded and $p^n_t \to 1-\exp(-\lambda_\infty t)$, we get $\Var(n^{-1}\Nn_t)\to 0$ by application of the dominated convergence theorem.

Now we are ready for the final step. Write
\begin{align}\label{eq:v*}
\lefteqn{\int_0^t \exp(2\lambda\Top \zeta^n_s) V^{n,*}_s \diff s} \nonumber\\
& = \int_0^t \exp(2\lambda\Top \zeta^n_s) \lambda\Top \Zn_s (1 - \varrho_s) \diff s + \int_0^t \exp(2\lambda\Top \zeta^n_s) \lambda\Top \Zn_s (\varrho_s - n^{-1} \Nn_s) \diff s.
\end{align}
Consider the expectation of the absolute value of the last integral. By the Cauchy-Schwartz inequality, its square is less than
\[
\E\int_0^t (\exp(2\lambda\Top \zeta^n_s) \lambda\Top \Zn_s)^2\diff s\times \int_0^t \E(\varrho_s - n^{-1} \Nn_s)^2 \diff s. 
\]
In this product, the first factor is bounded, whereas the second factor tends to zero by the above $L^2$-convergence of $n^{-1}N^n_s$ to $\varrho_s$ and by application of the monotone convergence theorem.

We now focus on the first term on the RHS of~\eqref{eq:v*}. By the ucp-convergence of the exponential term $\exp(2\lambda\Top \zeta^n_s) (1 - \varrho_s)$ to $\exp(2\lambdai s) (1 - \varrho_s)=\exp(\lambdai s)$ (as in Step 2), we can again apply the almost sure version of Lemma~\ref{lemma:muf}, to arrive at $\int_0^t \exp(2\lambda\Top \zeta^n_s) \lambda\Top \Zn_s (1 - \varrho_s) \diff s \stackrel{a.s.}{\to} \int_0^t \exp(\lambdai s) \lambdai\diff s=\exp(\lambdai t)-1$.
Summarizing all these intermediate results we get convergence in probability of the quadratic variation, i.e. \begin{equation*}
    \langle \mathbf{M}^{\zeta,n} \rangle_t \overset{\PP}{\rightarrow} \begin{bmatrix}
    Vt & 0 \\
    0 & e^{\lambdai t}-1
    \end{bmatrix}.
\end{equation*}
The MCLT allows us to deduce that $\mathbf{M}^{\zeta,n}$ converges weakly to a two-dimensional Gaussian martingale with independence of the components.

\emph{Step 5:} By the weak convergence of $\mathbf{M}^{\zeta,n}$ and weak convergence of the first term in \eqref{eq: 2nd SIE} to the zero process, we deduce by application of the continuous mapping theorem, weak convergence of \eqref{eq: 2nd SIE} to 
a Gaussian martingale with quadratic variation
$\int_0^t (V + \lambdai e^{\lambdai s}) \diff s$.
Therefore $\mathbf{M}^{\zeta,n}$ has the limit distribution of 
\begin{equation}\label{4.6 eq4} 
    \begin{bmatrix}
    \int_0^t\sqrt{V}\diff B^1_s \\
    \int_0^t\sqrt{\lambdai e^{\lambdai s}} \diff B^2_s
    \end{bmatrix},
\end{equation} where $B^1$ and $B^2$ are independent standard Brownian motions and thus we have weak convergence of the sum in \eqref{eq: 2nd SIE} to the Gaussian martingale in \eqref{4.6 eq4}.

\emph{Step 6:} In conclusion we showed that $\hNan$ can be written as the product of processes in \eqref{eq: basis SIE}. In Step 2 we show u.c.p.\ convergence of the first process and in Steps 3--5 we showed weak convergence of the second process. Therefore we have joint weak convergence of the processes in 
\eqref{eq: basis SIE}. Since multiplication is continuous at continuous limits in the Skorohod topology (c.f. \citet[Thm 4.2]{whitt1980some}) we have weak convergence of $\hNn$ to the process $\hat N$ given by ($\tilde B$ is a standard Brownian motion)
\begin{equation*}
    \hat{N}_t = e^{-\lambdai t} \int_0^t \sqrt{V  + \lambdai e^{\lambdai s}} \diff \tilde B_s,
\end{equation*} 
which solves the SDE
\[
\diff\hat N_t=-\lambdai \hat N_t\diff t + e^{-\lambdai t}\sqrt{V  + \lambdai e^{\lambdai t}} \diff \tilde B_t.
\]
Alternatively, we can represent this SDE as \eqref{4.6 final SDE},
\[
\diff\hat N_t=-\lambdai \hat N_t\diff t + \diff B_t + \diff G_t,
\]
where $B$ and $G$ are independent Gaussian martingales, with $\langle B\rangle_t=1-\exp(-\lambdai t)$ and $\langle G\rangle_t=\frac{V}{2\lambdai}(1-\exp(-2\lambdai t))$.
\end{proof}

\begin{remark}
Let's compare Proposition~\ref{Easy CLT N Bin} and Theorem~\ref{MMbin Maintheorem}. The Brownian motion $B$ of Theorem~\ref{MMbin Maintheorem} is as $B$ in Proposition~\ref{Easy CLT N Bin}. The Brownian motion $G$ in the theorem comes as an ``extra'' compared to the situation of the proposition. If we apply Theorem~\ref{MMbin Maintheorem} to the non-modulated case, which happens if the vector $\lambda$ is a constant $\lambdai$ times $\one$, we have  $\langle G \rangle_t = -\lambdai^2 \one\Top (\diag\{\pi\}D\Top+D\diag\{\pi\})\one\, t$, which is indeed zero in view of the property $D\pi=0$, see \eqref{eq:D}. So this theorem in the non-modulated case reduces to Proposition~\ref{Easy CLT N Bin}, as it should.
\end{remark}

The centering in Theorem~\ref{MMbin Maintheorem} is with the function $n\varrho_t$. In the proposition below we compare  $\varrho_t=1-\exp(-\lambdai t)$ with $\varrho^n_t=1-\exp(-\lambda\Top\zeta^n_t)$, and we shall see  the result of alternative centering  with $\varrho^n_t$ in Proposition~\ref{prop:p2}.

\begin{proposition}\label{prop:p1}
It holds that $\hat{H}^n_t:=\sqrt{n}(\varrho^n_t-\varrho_t)$ converges weakly to the process $\hat H$ given by $\hat{H}_t=\exp(-\lambdai t)G^H_t$,
where $G^H$ is a 
Brownian motion with variance parameter $V=\lambda\Top (\diag\{\pi\}D\Top+D\diag\{\pi\})\lambda$, so $\langle G^H\rangle_t=Vt$. 
\end{proposition}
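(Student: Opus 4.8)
The plan is to reduce the claim to a central limit theorem for the additive functional $\lambda\Top\zeta^n_t=\int_0^t\lambda\Top\Zn_s\diff s=\Lambda^n_t$ and then to transfer that CLT through the smooth map $x\mapsto 1-e^{-x}$. Since $\varrho^n_t-\varrho_t=\exp(-\lambdai t)-\exp(-\lambda\Top\zeta^n_t)$, a mean value argument gives $\hat H^n_t=\exp(-c^n_t)\,W^n_t$, where $W^n_t:=\sqrt{n}\,(\lambda\Top\zeta^n_t-\lambdai t)$ and $c^n_t$ lies between $\lambdai t$ and $\lambda\Top\zeta^n_t$. By the ergodic theorem \eqref{eq:ergon} one has $\lambda\Top\zeta^n_t\to\lambdai t$, and since these are nondecreasing processes with a continuous limit this convergence is u.c.p.\ by \citet[Thm VI.2.15(c)]{jacod2013limit}; consequently $\exp(-c^n_\cdot)$ converges u.c.p.\ to the deterministic continuous function $\exp(-\lambdai\cdot)$. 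It then remains to show that $W^n$ converges weakly to $G^H$ and to combine the two factors by Slutsky's theorem together with the continuity of multiplication in the Skorohod topology (\citet[Thm 4.2]{whitt1980some}), exactly as in Step~6 of the proof of Theorem~\ref{MMbin Maintheorem}.

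The heart of the argument is the CLT for $W^n$, which I would obtain from a deviation-matrix martingale representation. Writing $W^n_t=\sqrt{n}\,\lambda\Top\int_0^t(\Zn_s-\pi)\diff s$ and recalling from the proof of Theorem~\ref{MMbin Maintheorem} that $\Zn_t=\Zn_0+n\int_0^t Q\Zn_s\diff s+\tilde M^n_t$, left multiplication by $D$ together with the identities $DQ=\Pi-I$ and $\Pi\Zn_s=\pi$ from \eqref{eq:D} give $n\int_0^t(\Zn_s-\pi)\diff s=D\Zn_0-D\Zn_t+D\tilde M^n_t$, hence
\[
W^n_t=\frac{1}{\sqrt{n}}\lambda\Top(D\Zn_0-D\Zn_t)+\frac{1}{\sqrt{n}}\lambda\Top D\tilde M^n_t.
\]
The boundary term is bounded by a constant times $n^{-1/2}$ uniformly in $t$ (as $D$ and $\Zn$ are bounded), so it vanishes u.c.p.

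For the martingale term I would apply the martingale CLT \citet[Theorem~VIII.3.11]{jacod2013limit}. Its jumps are bounded by a constant times $n^{-1/2}$ and hence vanish, while its quadratic variation equals $\lambda\Top D\,\langle n^{-1/2}\tilde M^n\rangle_t\,D\Top\lambda=\lambda\Top D\big(\int_0^t\tilde V^{n,*}_s\diff s\big)D\Top\lambda$, with $\tilde V^{n,*}$ as in \eqref{eq:qvm}. By the ergodic theorem $\int_0^t\tilde V^{n,*}_s\diff s\to(-Q\diag\{\pi\}-\diag\{\pi\}Q\Top)\,t$ (using $Q\pi=0$), and the ensuing deviation-matrix simplification via $DQ=\Pi-I$ and $D\pi=0$ is precisely the one carried out in Step~4 of Theorem~\ref{MMbin Maintheorem}, yielding the limit $\lambda\Top(\diag\{\pi\}D\Top+D\diag\{\pi\})\lambda\,t=Vt$. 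Thus $W^n$ converges weakly to a continuous Gaussian martingale $G^H$ with $\langle G^H\rangle_t=Vt$, and combining with the u.c.p.\ factor $\exp(-c^n_\cdot)$ yields $\hat H^n\Rightarrow\exp(-\lambdai\cdot)G^H=\hat H$.

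I expect the main obstacle to be the quadratic-variation computation for the martingale term: one must correctly push the $\sqrt{n}$-normalization through the $n$-accelerated generator (the factor $n$ in both the drift and the quadratic variation of $\tilde M^n$ is exactly what makes $\sqrt{n}$ the right scaling and produces a nondegenerate limit), and then run the deviation-matrix algebra. This is, however, identical to the computation already done for Theorem~\ref{MMbin Maintheorem}, so it can be imported rather than redone. A minor secondary point is the uniformity in the Taylor/mean-value step; it is cleanest to note that $w\mapsto\sqrt{n}(1-e^{-w/\sqrt{n}})$ converges to the identity uniformly on compacts, so that $\sqrt{n}(1-e^{-W^n_\cdot/\sqrt{n}})\Rightarrow G^H$ by the extended continuous mapping theorem, after which multiplication by the deterministic $\exp(-\lambdai\cdot)$ concludes.
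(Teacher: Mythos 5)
Your proof is correct, but it takes a genuinely different route from the paper's. The paper derives an integral equation for $\hat H^n$ and solves it, obtaining
\[
\hat{H}^n_t=\exp(-\lambda\Top \zeta^n_t)\,\sqrt{n}\int_0^t \exp(\lambda\Top \zeta^n_s-\lambdai s)\,\lambda\Top(Z^n_s-\pi)\diff s,
\]
which is exactly \eqref{eq: basis SIE} with the martingale term deleted; it then imports Steps 2--6 of the proof of Theorem~\ref{MMbin Maintheorem} wholesale, including the bounded-variation argument for $\int_0^t \Psi^n_s \diff n^{-1/2}\Zn_s$ and the stochastic version of Lemma~\ref{lemma:muf} (both needed there because the weight $\Psi^n_s$ is random and time-varying). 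You instead factor out the exponential by the mean value theorem -- or, more cleanly, via the uniform-on-compacts convergence of $w\mapsto \sqrt{n}(1-e^{-w/\sqrt{n}})$ to the identity, which sidesteps any measurability fuss about $c^n_t$ -- and reduce everything to a CLT for the unweighted additive functional $W^n_t=\sqrt{n}\,\lambda\Top\int_0^t(\Zn_s-\pi)\diff s$. Because your deviation-matrix weight $\lambda\Top D$ is constant, the identity $DQ=\Pi-I$ makes the representation exact: the paper's $\diff\Zn$-integral collapses to the boundary term $n^{-1/2}\lambda\Top(D\Zn_0-D\Zn_t)$, which vanishes uniformly, and the quadratic-variation limit $Vt$ follows from the ergodic theorem together with the same algebra ($DQ=\Pi-I$, $D\pi=0$) as in Step~4, with no need for Lemma~\ref{lemma:muf} at all. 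Your scaling bookkeeping is also right: the generator $nQ$ contributes the factor $n$ in $\langle\tilde{M}^n\rangle$ that exactly offsets the $\sqrt{n}$ normalization, so $\langle n^{-1/2}\lambda\Top D\tilde{M}^n\rangle_t\to Vt$. What your route buys is a more elementary, self-contained argument; what the paper's route buys is brevity, since after solving the ODE the entire limit analysis is a citation of work already done for Theorem~\ref{MMbin Maintheorem}. Both computations of the limiting variance are, in the end, the same deviation-matrix algebra yielding $V=\lambda\Top(\diag\{\pi\}D\Top+D\diag\{\pi\})\lambda$.
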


\begin{proof}
We compute
\begin{align*}
\varrho^n_t-\varrho_t & = \exp(-\lambdai t)-\exp(-\lambda\Top\zeta^n_t) \\
& =\int_0^t \exp(-\lambda\Top\zeta^n_s) \lambda\Top\zeta^n_s\diff s-\int_0^t \exp(-\lambdai s)\lambdai\diff s \\
& =\int_0^t (\exp(-\lambda\Top\zeta^n_s)-\exp(-\lambdai s)) \lambda\Top Z^n_s\diff s+\int_0^t \exp(-\lambdai s)\lambda\Top(Z^n_s-\pi)\diff s \\
& =-\int_0^t (\varrho^n_s-\varrho_s) \lambda\Top Z^n_s\diff s+\int_0^t \exp(-\lambdai s)\lambda\Top(Z^n_s-\pi)\diff s.
\end{align*}
For $\hat{H}^n_t=\sqrt{n}(\varrho^n_t-\varrho_t)$ we then obtain
\[
\hat{H}^n_t=-\int_0^t \hat{H}^n_s\lambda\Top Z^n_s\diff s + \sqrt{n}\int_0^t \exp(-\lambdai s)\lambda\Top(Z^n_s-\pi)\diff s.
\]
Solving this equation yields
\[
\hat{H}^n_t=\exp(-\lambda\Top \zeta^n_t)\sqrt{n}\int_0^t \exp(\lambda\Top \zeta^n_s-\lambdai s)\lambda\Top(Z^n_s-\pi)\diff s.
\]
The latter equation has the same structure as \eqref{eq: basis SIE}, but with the martingale term missing. For the limit behaviour we can therefore copy the relevant parts of the proof of Theorem~\ref{MMbin Maintheorem}, which yields the assertion.
\end{proof}
Now we revisit Theorem~\ref{MMbin Maintheorem}, by replacing the centering $n\varrho_t$ by $n\varrho^n_t$. This leads to

\begin{proposition}\label{prop:p2}
Let $\hat{K}^n_t=n^{-1/2}(N^n_t-n\varrho^n_t)$. Then $\hat{K}^n$ converges weakly to the solution to the SDE $\diff \hat{K}_t=-\lambdai \hat{K}_t\diff t + \diff B_t$,
where $B$ is a continuous Gaussian martingale with $\langle B\rangle_t=1-\exp(-\lambdai t)$.
\end{proposition}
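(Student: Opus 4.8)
The plan is to bypass the two-martingale decomposition of Theorem~\ref{MMbin Maintheorem} and instead derive a clean linear SDE for $\hat{K}^n$ directly. The key observation is that the stochastic centering $\varrho^n_t=1-\exp(-\lambda\Top\zeta^n_t)$, with $\zeta^n_t=\int_0^t\Zn_s\diff s$, satisfies $\dot\varrho^n_t=\lambda\Top\Zn_t(1-\varrho^n_t)$, which is precisely the (random) ODE governing the conditional mean of $n^{-1}\Nn_t$ given $\cF^Z$. Substituting \eqref{MM Bin dNn} together with this relation into $\diff\hat{K}^n_t=n^{-1/2}(\diff\Nn_t-n\diff\varrho^n_t)$ makes the drift terms collapse, leaving
\begin{equation*}
\diff\hat{K}^n_t=-\lambda\Top\Zn_t\,\hat{K}^n_t\diff t + n^{-1/2}\diff\Mn_t,\qquad \hat{K}^n_0=0.
\end{equation*}
Crucially, the drift-correction term $n^{1/2}\lambda\Top(\Zn_s-\pi)(1-\varrho_s)$ appearing in \eqref{eq: basis SIE} — the source of the extra Gaussian martingale $G$ in Theorem~\ref{MMbin Maintheorem} — is absent here, because $\varrho^n$ already tracks the true trajectory of $\Zn$. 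The resulting equation has exactly the structure of the linear SDE solved in Theorem~\ref{MMBin hoofdstelling} (there with fixed $\alpha$), now with the generator scaled as $Q\mapsto nQ$.

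Next I would solve this linear SDE explicitly, obtaining
\begin{equation*}
\hat{K}^n_t=\exp(-\lambda\Top\zeta^n_t)\int_0^t\exp(\lambda\Top\zeta^n_s)\,\diff n^{-1/2}\Mn_s,
\end{equation*}
and assemble the limit from three ingredients. First, the exponential factors converge, $\exp(\pm\lambda\Top\zeta^n_t)\overset{ucp}{\rightarrow}\exp(\pm\lambdai t)$, by the ergodic theorem \eqref{eq:ergon} exactly as in Step~2 of the proof of Theorem~\ref{MMbin Maintheorem}. Second, for the driving martingale I would apply the MCLT to $n^{-1/2}\Mn$: its jumps are bounded by $n^{-1/2}\rightarrow0$, and its quadratic variation is $\langle n^{-1/2}\Mn\rangle_t=\int_0^t\lambda\Top\Zn_s(1-n^{-1}\Nn_s)\diff s$. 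Splitting the integrand via $1-n^{-1}\Nn_s=(1-\varrho_s)+(\varrho_s-n^{-1}\Nn_s)$, the second piece vanishes in probability by the $L^2$-convergence $n^{-1}\Nn_s\rightarrow\varrho_s$ already established inside the proof of Theorem~\ref{MMbin Maintheorem} (together with Cauchy--Schwarz), while the first converges, by the almost sure version of Lemma~\ref{lemma:muf}, to $\int_0^t\lambdai\exp(-\lambdai s)\diff s=1-\exp(-\lambdai t)$. Hence $\langle n^{-1/2}\Mn\rangle_t\overset{\PP}{\rightarrow}1-\exp(-\lambdai t)$, and the MCLT yields weak convergence of $n^{-1/2}\Mn$ to a continuous Gaussian martingale $B$ with $\langle B\rangle_t=1-\exp(-\lambdai t)$.

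Finally I would combine these pieces. The P-UT condition holds for these martingales as in Theorem~\ref{MMbin Maintheorem}, so the weak convergence theorem for stochastic integrals \citet[VI.6.22]{jacod2013limit} gives $\int_0^\cdot\exp(\lambda\Top\zeta^n_s)\diff n^{-1/2}\Mn_s\overset{d}{\rightarrow}\int_0^\cdot\exp(\lambdai s)\diff B_s$. Since the outer factor $\exp(-\lambda\Top\zeta^n_t)$ converges u.c.p.\ to a \emph{deterministic} limit, joint convergence is automatic, and continuity of multiplication in the Skorokhod topology \citet[Thm~4.2]{whitt1980some} delivers $\hat{K}^n_t\overset{d}{\rightarrow}\exp(-\lambdai t)\int_0^t\exp(\lambdai s)\diff B_s$, which solves $\diff\hat{K}_t=-\lambdai\hat{K}_t\diff t+\diff B_t$ with $\hat{K}_0=0$.

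I expect the only genuine technical point to be the quadratic-variation convergence, and even that is mild: it is a strictly simpler version of the $V^{n,*}$ computation carried out in Theorem~\ref{MMbin Maintheorem}, now without the weight $\exp(2\lambda\Top\zeta^n_s)$ and with no nonnegative-drift constant $V$ to track. In effect the entire content of the proposition is the algebraic cancellation of the first paragraph, which also explains heuristically why the extra martingale $G$ of Theorem~\ref{MMbin Maintheorem} vanishes: by Proposition~\ref{prop:p1} the discrepancy $n\varrho^n_t-n\varrho_t$ fluctuates at order $n^{1/2}$ with exactly the variance structure carried by $G$, so re-centering with $n\varrho^n_t$ removes precisely that contribution.
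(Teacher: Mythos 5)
Your proof is correct and follows essentially the same route as the paper: the paper likewise derives the drift-cancelled linear SDE \eqref{eq:k}, solves it as $\hat{K}^n_t=\exp(-\lambda\Top \zeta^n_t)\int_0^t n^{-1/2}\exp(\lambda\Top \zeta^n_s)\diff M^n_s$, and then reuses the limit machinery of Theorem~\ref{MMbin Maintheorem} (quadratic variation convergence via the $L^2$-convergence of $n^{-1}N^n$, Lemma~\ref{lemma:muf}, MCLT, and Skorokhod-continuity of multiplication). The only, harmless, difference is where the exponential weight is handled: you apply the MCLT to the unweighted martingale $n^{-1/2}M^n$ and then pass to the integral via P-UT and \citet[VI.6.22]{jacod2013limit}, as in the proof of Theorem~\ref{MMBin hoofdstelling}, whereas the paper's proof applies the MCLT directly to the weighted integral, whose quadratic variation converges to $\exp(\lambdai t)-1$.
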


\begin{proof}
We follow the line of reasoning of the proof of Theorem~\ref{MMbin Maintheorem}. Parallel to the first step of that proof we now obtain
\begin{equation}\label{eq:k}
\hat{K}^n_t=n^{-1/2}M^n_t-\int_0^t \lambda\Top Z^n_s \hat{K}^n_s\diff s,
\end{equation}
equivalent to
\[
\hat{K}^n_t=\exp(-\lambda\Top \zeta^n_t)\,\int_0^t n^{-1/2}\exp(\lambda\Top \zeta^n_s)\diff M^n_s,
\]
which is a simpler version of \eqref{eq: basis SIE}. Following the steps in the proof of the theorem, we arrive at the weak convergence of the stochastic integral to a Gaussian martingale $\tilde B$ with quadratic variation $\exp(\lambdai t)-1$ and of the process $\hat{K}^n$ to $\hat{K}$ given by $\hat{K}_t=\exp(-\lambdai t)\tilde B_t$. The latter being equivalent to $\hat{K}$ solving
\[
\diff \hat{K}_t=-\lambdai \hat{K}_t\diff t + \diff B_t,
\]
where $B$ is a Gaussian martingale with $\langle B\rangle_t=1-\exp(-\lambdai t)$.
\end{proof}
Putting the conclusions of Propositions~\ref{prop:p1} and~\ref{prop:p2} together and comparing them to Theorem~\ref{MMbin Maintheorem}, we can provide an illuminating explanation of the result of the theorem. Write $\hat N^n_t=\hat H^n_t+\hat K^n_t$,  and recall the following weak limits. We have seen that the limit process $\hat H$ satisfies $\diff \hat H_t=-\lambdai \hat H_t\diff t + \exp(-\lambdai t)\diff G^H_t$, with $\langle G^H\rangle_t=Vt$. And we have also seen that the limit process $\hat K$ is a Gaussian process satisfying $\diff \hat K_t=-\lambdai \hat K_t\diff t + \diff B_t$, with $\langle B\rangle_t=1-\exp(-\lambdai t)$. Adding up these limits (justified by the proof of the theorem) yields that the limit process $\hat N$ satisfies $\diff\hat N_t=-\lambdai \hat N_t\diff t + \diff B_t+ \exp(-\lambdai t)\diff G^H_t$. With $G_t=\int_0^t \exp(-\lambdai s)\diff G^H_s$, we have $\langle G\rangle_t$ as in the theorem, and the SDE \eqref{4.6 final SDE} follows again. 

Summarizing, the result of Theorem~\ref{MMbin Maintheorem} can be explained by decoupling $\hat N^n$ into $\hat H^n$ and $\hat K^n$ and their limits results according to Propositions~\ref{prop:p1} and~\ref{prop:p2}. From a distributional point of view, the result of Theorem~\ref{MMbin Maintheorem} is more appealing than Proposition~\ref{prop:p2}, since the latter involves centering with a random process. In line with the common view on a central limit theorem, one can interpret the statement of the theorem by saying that asymptotically, for fixed $t$, the random variable $\hat N^n_t$ has a normal distribution with (nonrandom) mean $n\varrho_t$ and variance $n\exp(-2\lambdai t)\left(Vt+\exp(\lambdai t)-1\right)$.

Note that the $\hat K_t$ in Proposition~\ref{prop:p2} is the same limiting process as the limiting process $\hat N_t$ in \autoref{MMBin hoofdstelling} and \autoref{MMBin Hoofdstelling 2}. In continuation of our discussion above we can explain the equivalence of these limits via the centering process. The process $\hat{K}^n$ is centered with the stochastic process $n\varrho^n$ (and likewise we use for $\hat N^{n,\alpha}$ centering with the stochastic process $n\varrho^\alpha$). Centering in this way, as opposed to centering with $n\varrho_t$, removes the first term in \eqref{eq: basis SIE}, which in \autoref{MMbin Maintheorem} converges to the Gaussian term $G$ with $\langle G\rangle_t=\frac{V}{2\lambdai}(1-\exp(-2\lambdai t))$. Intuitively one cancels out the 'extra' variation, due to the first term \eqref{eq: basis SIE} which results in a situation where the order of limits does not matter anymore. This situation is to some extent similar to the case for the process given by \eqref{eq:na}. But note also the difference between the two cases, the martingale in~\eqref{eq:na} is continuous and Gaussian, whereas the martingale in \eqref{eq:k} is a (scaled) compensated jump martingale, although with a continuous Gaussian limit.
\medskip\\
Next we investigate the limit behaviour of $N_t$ when we speed up the Markov chain with $n^\beta$ for some $\beta>0$. Note that before we had $\beta=1$. The Propositions~\ref{prop:p1} and~\ref{prop:p2} now take a different form, but the proofs of the results in Proposition~\ref{prop:p3} below are similar to the previous ones, and are therefore omitted. Let us write, in order to express the dependence on $\beta$, $\varrho^{n,\beta}=1-\exp(-\lambda\Top\zeta^{n,\beta}_t)$ with $\zeta^{n,\beta}_t=\int_0^t Z_{n^\beta s}\diff s$.

\begin{proposition}\label{prop:p3}
(i) Let $\hat H^{n,\beta}_t:=n^{\beta/2}(\varrho^{n,\beta}_t-\varrho_t)$. Then $\hat H^{n,\beta}$  converges weakly to the process $\hat H$ given by $\hat H_t=\exp(-\lambdai t)G^H_t$,
where $G^H$, as before, is a 
Brownian motion with variance parameter  $V=\lambda\Top (\diag\{\pi\}D\Top+D\diag\{\pi\})\lambda$.\\
(ii) Let $\hat K^{n,\beta}_t=n^{-1/2}(N^n_t-n\varrho^{n,\beta}_t)$. Then $\hat K^{n,\beta}$ converges weakly to the solution to the SDE $\diff \hat K_t=-\lambdai \hat K_t\diff t + \diff B_t$,
where $B$ is a continuous Gaussian martingale with $\langle B\rangle_t=1-\exp(-\lambdai t)$.
\end{proposition}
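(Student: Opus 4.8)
The plan is to follow \emph{verbatim} the structure of the proofs of Propositions~\ref{prop:p1} and~\ref{prop:p2}, the only genuine changes being that the generator is scaled by $n^\beta$ instead of $n$, and that in part (i) the normalization becomes $n^{\beta/2}$ rather than $\sqrt n$. Throughout, the occupation averages still converge, $\zeta^{n,\beta}_t\to\pi t$ a.s., by \eqref{eq:ergon} applied with $m=n^\beta\to\infty$, so that $\exp(\pm\lambda\Top\zeta^{n,\beta}_t)\overset{ucp}{\to}\exp(\pm\lambdai t)$ exactly as in Step~2 of Theorem~\ref{MMbin Maintheorem}. For part (i), I would first reproduce the opening computation of Proposition~\ref{prop:p1}: differentiating $\exp(-\lambda\Top\zeta^{n,\beta}_t)$ and subtracting the matching identity for $\varrho_t$ yields the linear integral equation
\[
\hat H^{n,\beta}_t=-\int_0^t \hat H^{n,\beta}_s\,\lambda\Top Z_{n^\beta s}\diff s + n^{\beta/2}\int_0^t \exp(-\lambdai s)\,\lambda\Top(Z_{n^\beta s}-\pi)\diff s,
\]
whose solution is $\hat H^{n,\beta}_t=\exp(-\lambda\Top\zeta^{n,\beta}_t)\,n^{\beta/2}\int_0^t\exp(\lambda\Top\zeta^{n,\beta}_s-\lambdai s)\,\lambda\Top(Z_{n^\beta s}-\pi)\diff s$.

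The crux of part (i) is the driving term. Writing $\diff Z_{n^\beta t}=n^\beta QZ_{n^\beta t}\diff t+\diff\tilde M^{n,\beta}_t$ with $\tilde M^{n,\beta}_t:=\tilde M_{n^\beta t}$, and using $DQ=\Pi-I$ together with $\Pi Z_{n^\beta s}=\pi$ from \eqref{eq:D}, one obtains the identity
\[
n^\beta\lambda\Top(Z_{n^\beta s}-\pi)\diff s=-\lambda\Top D\bigl(\diff Z_{n^\beta s}-\diff\tilde M^{n,\beta}_s\bigr),
\]
so that the driving term splits as
\[
-n^{-\beta/2}\int_0^t\Phi^{n,\beta}_s\diff Z_{n^\beta s}+n^{-\beta/2}\int_0^t\Phi^{n,\beta}_s\diff\tilde M^{n,\beta}_s,\qquad \Phi^{n,\beta}_s:=\exp(\lambda\Top\zeta^{n,\beta}_s-\lambdai s)\lambda\Top D.
\]
As in Step~3 of Theorem~\ref{MMbin Maintheorem}, the $\diff Z$-integral is bounded by total variation plus sup-norm and is annihilated by $n^{-\beta/2}\to0$ (this is exactly where $\beta>0$ is used), hence vanishes u.c.p. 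For the martingale term I would invoke the MCLT: its quadratic variation is $n^{-\beta}\int_0^t\Phi^{n,\beta}_s\,\diff\langle\tilde M^{n,\beta}\rangle_s\,(\Phi^{n,\beta}_s)\Top$, and since $\langle\tilde M^{n,\beta}\rangle_s=n^\beta\int_0^s(\diag\{QZ_{n^\beta u}\}-Q\diag\{Z_{n^\beta u}\}-\diag\{Z_{n^\beta u}\}Q\Top)\diff u$ by \eqref{eq:qvarm}, the two powers of $n^\beta$ cancel. With $\Phi^{n,\beta}\overset{ucp}{\to}\lambda\Top D$, the ergodic theorem and the a.s.\ version of Lemma~\ref{lemma:muf} then give the deterministic limit $Vt$ precisely as in \eqref{eq:psi2}, so the martingale term converges to a Brownian motion $G^H$ with $\langle G^H\rangle_t=Vt$; multiplying by $\exp(-\lambda\Top\zeta^{n,\beta}_t)\overset{ucp}{\to}\exp(-\lambdai t)$ yields $\hat H^{n,\beta}\Rightarrow\exp(-\lambdai\,\cdot)G^H$.

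For part (ii) I would mirror Proposition~\ref{prop:p2}. Since $\varrho^{n,\beta}$ solves $\dot\varrho^{n,\beta}_t=\lambda\Top Z_{n^\beta t}(1-\varrho^{n,\beta}_t)$, the cancellation producing \eqref{eq:k} goes through unchanged and gives
\[
\diff\hat K^{n,\beta}_t=-\lambda\Top Z_{n^\beta t}\,\hat K^{n,\beta}_t\diff t+n^{-1/2}\diff M^n_t,\qquad \hat K^{n,\beta}_t=\exp(-\lambda\Top\zeta^{n,\beta}_t)\int_0^t n^{-1/2}\exp(\lambda\Top\zeta^{n,\beta}_s)\diff M^n_s.
\]
The stochastic integral has quadratic variation $\int_0^t\exp(2\lambda\Top\zeta^{n,\beta}_s)\lambda\Top Z_{n^\beta s}(1-n^{-1}N^n_s)\diff s$. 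I would first re-establish $n^{-1}N^n_t\to\varrho_t$ in $L^2$: conditional on $\cF^Z$ the variable $N^n_t$ is binomial with parameters $n$ and $\varrho^{n,\beta}_t$, and the law of total variance together with $\varrho^{n,\beta}_t\to\varrho_t$ in $L^2$ (bounded convergence, since $\zeta^{n,\beta}_t\to\pi t$) forces the variance to zero, exactly as in Theorem~\ref{MMbin Maintheorem}. Splitting off the $\varrho_s-n^{-1}N^n_s$ contribution by Cauchy--Schwarz and applying Lemma~\ref{lemma:muf} to the remainder gives the limit $\exp(\lambdai t)-1$; the MCLT then delivers a Gaussian martingale $\tilde B$ with $\langle\tilde B\rangle_t=\exp(\lambdai t)-1$, and $\hat K^{n,\beta}\Rightarrow\exp(-\lambdai\,\cdot)\tilde B$, which is the stated SDE with $\langle B\rangle_t=1-\exp(-\lambdai t)$.

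The one genuinely load-bearing point --- and the reason the limits in both parts are \emph{independent of} $\beta$ --- is the self-consistency of the scaling: the $n^{\beta/2}$ normalization in part (i) is exactly the square root of the $n^\beta$ speed-up appearing in $\langle\tilde M^{n,\beta}\rangle$, so the two cancel and leave the $\beta$-free constant $V$. I expect the only real work to be verifying this cancellation and re-confirming that the ergodic averages and the $L^2$-convergence of $n^{-1}N^n$ survive when the chain is run at speed $n^\beta$ (they do, since $n^\beta\to\infty$); every remaining estimate is copied from the $\beta=1$ proofs, which is why the authors omit the details.
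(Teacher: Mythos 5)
Your proposal is correct and takes essentially the same approach as the paper: the paper omits the proof of Proposition~\ref{prop:p3} precisely because it is the adaptation of Propositions~\ref{prop:p1} and~\ref{prop:p2} (via Steps 2--4 of Theorem~\ref{MMbin Maintheorem}) that you carry out, with the generator scaled by $n^\beta$ and the chain's Dynkin martingale satisfying $\langle\tilde M^{n,\beta}\rangle_t = n^\beta\int_0^t(\diag\{QZ_{n^\beta s}\}-Q\diag\{Z_{n^\beta s}\}-\diag\{Z_{n^\beta s}\}Q\Top)\diff s$. Your identification of the key cancellation --- the $n^{\beta/2}$ normalization against the $n^\beta$ speed-up in the quadratic variation, which is exactly why the limits are $\beta$-free --- is the right load-bearing point, and the remaining estimates (vanishing of the $\diff Z$-integral, Lemma~\ref{lemma:muf}, the $L^2$-convergence of $n^{-1}N^n_t$) transfer as you describe.
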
 
As a consequence of this proposition, we have the following extension of Theorem~\ref{MMbin Maintheorem}.

\begin{theorem}\label{thm:mainthm2}
Let $\Nn$ be given by \eqref{MM Bin dNn} and $\varrho$ by \eqref{MM Bin Asymp varrho}. Then the scaled and centered process $\hat{N}^{n,\beta}$ given by 
\begin{equation*}
\hat{N}^{n,\beta}_t :=n^{-1/2(1+(1-\beta)^+)}(N^n_t-n\varrho_t),
\end{equation*} converges weakly (as $n \rightarrow \infty$) to the solution of the following SDE 
\begin{equation}
\diff \hN_t = -\lambdai \hN_t \diff t  +\one_{\{\beta\leq 1\}} \diff G_t+ \one_{\{\beta\geq 1\}} \diff B_t, \quad \hN_0 = 0,
\end{equation} where $G$  is a Gaussian martingale with 
\[
\langle G\rangle_t=\frac{V}{2\lambdai}(1-\exp(-2\lambdai t)),
\]
with $V=\lambda\Top (\ediag\{\pi\}D\Top+D\ediag\{\pi\})\lambda$ ($D$ is the deviation matrix of the background Markov chain),
and $B$  is a Gaussian martingale with $\langle B \rangle_t = 1-\exp(-\lambdai t)$, independent of $G$. 

Alternatively, we have the representation
\[
\diff \hN_t = -\lambdai \hN_t \diff t  +
\left(\one_{\{\beta\leq 1\}} V\exp(-2\lambdai t) +
\one_{\{\beta\geq 1\}}\lambdai \exp(-\lambdai t)\right)^{1/2} \diff W_t,
\]
where $W$ is a standard Brownian motion.
\end{theorem}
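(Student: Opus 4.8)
The plan is to express $\hat{N}^{n,\beta}$ as a weighted sum of the two building blocks $\hat{K}^{n,\beta}$ and $\hat{H}^{n,\beta}$ from Proposition~\ref{prop:p3}, tracking the correct powers of $n$, and to let the two indicator functions emerge from which weight stays of order one as $n\to\infty$. First I would record the algebraic decomposition. Splitting the centering as $N^n_t-n\varrho_t=(N^n_t-n\varrho^{n,\beta}_t)+n(\varrho^{n,\beta}_t-\varrho_t)$ and inserting the definitions $\hat{K}^{n,\beta}_t=n^{-1/2}(N^n_t-n\varrho^{n,\beta}_t)$ and $\hat{H}^{n,\beta}_t=n^{\beta/2}(\varrho^{n,\beta}_t-\varrho_t)$ gives
\begin{equation*}
N^n_t-n\varrho_t=n^{1/2}\hat{K}^{n,\beta}_t+n^{1-\beta/2}\hat{H}^{n,\beta}_t,
\end{equation*}
and multiplying by the scaling $n^{-\frac{1}{2}(1+(1-\beta)^+)}$ yields
\begin{equation*}
\hat{N}^{n,\beta}_t=n^{-\frac{1}{2}(1-\beta)^+}\hat{K}^{n,\beta}_t+n^{\frac{1-\beta}{2}-\frac{1}{2}(1-\beta)^+}\hat{H}^{n,\beta}_t.
\end{equation*}

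Next I would read off the two exponents as functions of $\beta$. The coefficient of $\hat{K}^{n,\beta}$ has exponent $-\frac{1}{2}(1-\beta)^+$, which is $0$ for $\beta\ge1$ and strictly negative for $\beta<1$; the coefficient of $\hat{H}^{n,\beta}$ has exponent $\frac{1-\beta}{2}-\frac{1}{2}(1-\beta)^+$, which is $0$ for $\beta\le1$ and strictly negative for $\beta>1$. Hence exactly the block(s) flagged by the indicators stay of order one, and Proposition~\ref{prop:p3} supplies their limits: $\hat{K}^{n,\beta}$ converges weakly to $\hat{K}$ solving $\diff\hat{K}_t=-\lambdai\hat{K}_t\diff t+\diff B_t$, the $B$-term, while $\hat{H}^{n,\beta}$ converges weakly to $\hat{H}_t=\exp(-\lambdai t)G^H_t$; setting $G_t=\int_0^t\exp(-\lambdai s)\diff G^H_s$ one checks $\langle G\rangle_t=\frac{V}{2\lambdai}(1-\exp(-2\lambdai t))$ and $\diff\hat{H}_t=-\lambdai\hat{H}_t\diff t+\diff G_t$, the $G$-term.

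For the vanishing block in the cases $\beta\ne1$ I would use a Slutsky-type argument: weak convergence of $\hat{K}^{n,\beta}$ (respectively $\hat{H}^{n,\beta}$) implies tightness, hence boundedness in probability of the compact-interval supremum, so multiplication by a deterministic coefficient tending to zero drives that term to the zero process in the Skorohod topology, and $\hat{N}^{n,\beta}$ then converges weakly to the surviving block by Slutsky's theorem (no joint law is needed here). The genuinely delicate case is $\beta=1$, where both coefficients equal $1$ and one needs the joint weak convergence of $(\hat{K}^{n,1},\hat{H}^{n,1})$ together with the independence of the limits $B$ and $G$; this is exactly Theorem~\ref{MMbin Maintheorem}, whose proof secures independence from the convergence of the quadratic variation of the two-dimensional martingale $\mathbf{M}^{\zeta,n}$ to a diagonal matrix. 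I expect this to be the only real obstacle, and it is discharged simply by quoting the $\beta=1$ result. Since $\hat{K}$ and $\hat{H}$ share the drift coefficient $-\lambdai$, adding them when both survive produces $\diff\hat{N}_t=-\lambdai\hat{N}_t\diff t+\diff B_t+\diff G_t$, and in all three regimes the limit equals $\one_{\{\beta\le1\}}\hat{H}+\one_{\{\beta\ge1\}}\hat{K}$, giving the stated SDE.

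Finally, for the alternative one-dimensional representation I would match quadratic variations. As $B$ and $G$ are independent continuous Gaussian martingales, the driving martingale $\one_{\{\beta\le1\}}G+\one_{\{\beta\ge1\}}B$ has quadratic variation $\one_{\{\beta\le1\}}\langle G\rangle_t+\one_{\{\beta\ge1\}}\langle B\rangle_t$, whose derivative is $g(t):=\one_{\{\beta\le1\}}V\exp(-2\lambdai t)+\one_{\{\beta\ge1\}}\lambdai\exp(-\lambdai t)$. Representing this martingale as $\int_0^\cdot g(s)^{1/2}\diff W_s$ against a standard Brownian motion $W$, by the martingale representation for continuous Gaussian martingales with deterministic quadratic variation, yields the second SDE.
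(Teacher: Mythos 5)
Your proof is correct and follows essentially the same route as the paper's: decomposing $\hat{N}^{n,\beta}$ into $n$-weighted copies of $\hat{K}^{n,\beta}$ and $\hat{H}^{n,\beta}$, invoking Proposition~\ref{prop:p3} for the surviving block (with the vanishing block killed by its negative power of $n$), and quoting Theorem~\ref{MMbin Maintheorem} for the balanced case $\beta=1$. In fact you are slightly more careful than the paper in two places: you spell out the Slutsky/tightness argument for discarding the vanishing term, and you justify the alternative representation by matching quadratic variations, both of which the paper leaves implicit.
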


\begin{proof}
We only have to consider the cases $\beta<1$ and $\beta>1$, as the case $\beta=1$ is covered by Theorem~\ref{MMbin Maintheorem}. For $\beta<1$ we have $\hat{N}^{n,\beta}=n^{-1+\beta/2}(\Nn_t-n\varrho_t) = n^{(\beta-1)/2}\hat K^{n,\beta}_t+\hat H^{n,\beta}$. From Proposition~\ref{prop:p3} we obtain that $\hat{N}^{n,\beta}$ has $\hat H$ as the limit process.
For $\beta>1$ we have $\hat{N}^{n,\beta}=n^{-1/2}(\Nn_t-n\varrho_t) = \hat K^{n,\beta}_t+n^{(1-\beta)/2}\hat H^{n,\beta}$. From Proposition~\ref{prop:p3} we now obtain that $\hat{N}^{n,\beta}$ has $\hat K$ as the limit process.

Putting these two cases (combined with $\beta=1$) together we see that we obtain for $\hat{N}^{n,\beta}$ the limit process $\one_{\{\beta\leq 1\}}\hat H+\one_{\{\beta\geq 1\}}\hat K$. Therefore we also have
\begin{align*}
\diff \hat{N}^{n,\beta}_t & =-\lambdai \hat{N}^{n,\beta}_t\diff t +\one_{\{\beta\leq 1\}} \exp(-\lambdai t)\diff G^H_t+ \one_{\{\beta\geq 1\}} \diff B_t\\
& =-\lambdai \hat{N}^{n,\beta}_t\diff t +\one_{\{\beta\leq 1\}} \diff G_t+ \one_{\{\beta\geq 1\}} \diff B_t,
\end{align*}
which completes the proof. 
\end{proof}

\begin{remark}\label{remark: Thm with beta}
From the quadratic variation of the Brownian terms in the limit of \autoref{thm:mainthm2} one sees that the quadratic variation process converges to $\frac{V}{2\lambdai}\one_{\{\beta\leq 1\}}+\one_{\{\beta\geq 1\}}$ if $t \to \infty$. Note that then also the quadratic variation $\langle \hN\rangle_t\to \frac{V}{2\lambdai}\one_{\{\beta\leq 1\}}+\one_{\{\beta\geq 1\}}$, as it is completely determined by the quadratic variation of the martingale part of $\hN$.
Therefore, the Brownian terms converge to a Gaussian random variable with expectation zero for $t\to\infty$ and variance $\frac{V}{2\lambdai}\one_{\{\beta\leq 1\}}+\one_{\{\beta\geq 1\}}$. It follows that the limiting process $\hat N$ in \autoref{thm:mainthm2} is Gaussian with vanishing variance for $t\to\infty$, and therefore behaves as the constant zero process for large $t$. 
\end{remark}

The previous limit theorems were based on a fixed value of $\lambda$, which is possibly something  to relax. Consider a modulated process, but with default intensity $n^{-\gamma} \lambda^\top Z_t$, for some $\gamma >0$. In financial terms, we consider a market with many obligors whose individual default rate tends to zero and we are still interested in the total number of defaults. A before, we scale $Q \mapsto n^\beta Q$ for some $\beta > 0$, which speeds up the background process. We omit the dependence on $\beta$ in the notation as it will turn out that the diffusion limit we derive is independent of $\beta$. So, we consider
\[
\diff N^{n,\gamma}_t=n^{-\gamma} \lambda^\top \Zn_t(n-N^{n,\gamma}_t)\diff t + \diff M^{n,\gamma}_t, \quad N^{n,\gamma}_0=0. 
\]
If $\gamma=1$, we know that the limit process (for $n\to\infty$) is a Poisson process with intensity $\lambdai$. The case $\gamma>1$ is not very interesting, one certainly has $N^{n,\gamma}_t\stackrel{a.s.}{\to} 0$ for fixed $t$. But for $0<\gamma< 1$ there is something to do; the case $\gamma=0$, we have already encountered in Proposition~\ref{Easy CLT N Bin}. Write $\lambda^{n,\gamma}_t$ for $n^{-\gamma} \lambda^\top \Zn_t$ and $\Lambda^{n,\gamma}_t =\int_0^t \lambda^{n,\gamma}_s \diff s$. Put $\varrho^{n,\gamma}_t=1-\exp(-\Lambda^{n,\gamma}_t)$.

\begin{proposition}\label{prop:beta}
Let $\beta > 0$. Consider for $\gamma\in (0,1)$ the process $\hN^{n,\gamma}$ given by 
\[
\hN^{n,\gamma}_t=n^{(\gamma-1)/2}\left(N^{n,\gamma}_t-n\varrho^{n,\gamma}_t\right). 
\]
As $n \to \infty$, this process converges weakly to a Brownian motion with variance parameter $\lambdai$.
\end{proposition}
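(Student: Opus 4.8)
The plan is to follow the template of \autoref{MMBin hoofdstelling} and Proposition~\ref{prop:p2}, the one genuinely new feature being that the vanishing intensity $n^{-\gamma}\lambda\Top\Zn$ simultaneously kills the mean-reverting drift \emph{and} keeps the pool of surviving obligors from depleting, so that the limiting quadratic variation accumulates linearly instead of saturating at $1-\exp(-\lambdai t)$. Since $\varrho^{n,\gamma}$ satisfies $\dot\varrho^{n,\gamma}_t=\lambda^{n,\gamma}_t(1-\varrho^{n,\gamma}_t)$, subtracting $n$ times this identity from the defining SDE for $N^{n,\gamma}$ and scaling by $n^{(\gamma-1)/2}$ gives the linear SDE
\[
\diff \hN^{n,\gamma}_t=-\lambda^{n,\gamma}_t\,\hN^{n,\gamma}_t\diff t + n^{(\gamma-1)/2}\diff M^{n,\gamma}_t,
\]
whose solution is $\hN^{n,\gamma}_t=\exp(-\Lambda^{n,\gamma}_t)\int_0^t \exp(\Lambda^{n,\gamma}_s)\,n^{(\gamma-1)/2}\diff M^{n,\gamma}_s$. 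Because $\Zn$ takes values among the basis vectors, $0\le\Lambda^{n,\gamma}_t\le n^{-\gamma}(\max_i\lambda_i)t$, so $\sup_{t\le T}\Lambda^{n,\gamma}_t\to 0$ \emph{deterministically} and hence $\exp(\pm\Lambda^{n,\gamma}_\cdot)\to 1$ uniformly on compacts. This is precisely why no mean reversion survives in the limit.

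Next I would identify the limit of the rescaled martingale. From the counting-process compensator $\langle M^{n,\gamma}\rangle_t=\int_0^t\lambda^{n,\gamma}_s(n-N^{n,\gamma}_s)\diff s$, the factors $n^{\gamma-1}$ and $n^{-\gamma}$ combine with $n$ to leave
\[
\langle n^{(\gamma-1)/2}M^{n,\gamma}\rangle_t=\int_0^t \lambda\Top \Zn_s\,(1-n^{-1}N^{n,\gamma}_s)\diff s.
\]
I would then show $n^{-1}N^{n,\gamma}_s$ is negligible: conditionally on $\cF^Z$, $N^{n,\gamma}_s$ is binomial$(n,\varrho^{n,\gamma}_s)$ with $\varrho^{n,\gamma}_s\le\Lambda^{n,\gamma}_s\to 0$, so $\E[n^{-1}N^{n,\gamma}_s\mid\cF^Z]=\varrho^{n,\gamma}_s\to 0$, and dominated convergence (the integrand is bounded and $\varrho^{n,\gamma}_s\to 0$ pointwise) gives $\int_0^t \lambda\Top\Zn_s\,n^{-1}N^{n,\gamma}_s\diff s\to 0$ in $L^1$. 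Meanwhile the ergodic theorem \eqref{eq:ergon} yields $\int_0^t \lambda\Top\Zn_s\diff s\overset{a.s.}{\to}\lambdai t$ for \emph{every} $\beta>0$ (this is where the $\beta$-independence of the limit enters). Together these give $\langle n^{(\gamma-1)/2}M^{n,\gamma}\rangle_t\overset{\PP}{\to}\lambdai t$.

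With the quadratic variation in hand I would apply the martingale CLT. The jumps of $n^{(\gamma-1)/2}M^{n,\gamma}$ are bounded by $n^{(\gamma-1)/2}\to 0$ since $\gamma<1$, and the quadratic-variation limit $\lambdai t$ is deterministic and continuous, so the random-quadratic-variation CLT \citet[Theorem 4, p.567]{liptser2012theory} (exactly as in \autoref{MMBin hoofdstelling}) shows $n^{(\gamma-1)/2}M^{n,\gamma}$ converges weakly to a continuous Gaussian martingale $B$ with $\langle B\rangle_t=\lambdai t$, i.e.\ a Brownian motion with variance parameter $\lambdai$. Verifying the P-UT condition \citet[VI.6.13]{jacod2013limit} for this martingale sequence and invoking the stochastic-integral convergence theorem \citet[VI.6.22]{jacod2013limit} against the integrand $\exp(\Lambda^{n,\gamma}_\cdot)\to 1$, then multiplying by $\exp(-\Lambda^{n,\gamma}_t)\to 1$, yields $\hN^{n,\gamma}\Rightarrow B$, completing the proof.

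The main obstacle is the quadratic-variation step: one must see that, in contrast to the fixed-$\lambda$ case where the depletion factor $1-n^{-1}N$ decays and forces the saturating limit $1-\exp(-\lambdai t)$, here the vanishing intensity keeps $n^{-1}N^{n,\gamma}_s$ asymptotically null so that the depletion factor stays at $1$ and the variation grows linearly. Controlling $n^{-1}N^{n,\gamma}$ uniformly enough to discard it under the time integral—carried out via the conditional binomial mean and dominated convergence—is the only new estimate; everything else is structurally identical to the earlier theorems.
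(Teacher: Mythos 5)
Your proposal is correct and follows essentially the same route as the paper's own proof: the same linear SDE and explicit solution $\hN^{n,\gamma}_t=\exp(-\Lambda^{n,\gamma}_t)\int_0^t\exp(\Lambda^{n,\gamma}_s)\,n^{(\gamma-1)/2}\diff M^{n,\gamma}_s$, the same quadratic-variation computation with $n^{-1}N^{n,\gamma}\to 0$ (the paper gets this from $\E\,n^{-1}N^{n,\gamma}_t\to 0$ and nonnegativity, you via the conditional binomial mean and dominated convergence---an equivalent estimate), the same martingale CLT giving the limit $\sqrt{\lambdai}B$, and the same P-UT/stochastic-integral-convergence step against $\exp(\Lambda^{n,\gamma}_\cdot)\to 1$. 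Your observation that $\sup_{t\le T}\Lambda^{n,\gamma}_t\le n^{-\gamma}(\max_i\lambda_i)T\to 0$ deterministically is in fact slightly sharper than the paper's convergence in probability, but the argument is otherwise the same.
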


\begin{proof}
One checks that 
\[
\diff \hN^{n,\gamma}_t=-\lambda^{n,\gamma}_t \hN^{n,\gamma}_t\diff t + \diff \hat M^{n,\gamma}_t,
\]
where $\hat M^{n,\gamma}_t= n^{(\gamma-1)/2} M^{n,\gamma}_t$, with
\[
\langle\hat M^{n,\gamma}\rangle_t =\int_0^t\lambda^\top \Zn_s (1-\frac{N^{n,\gamma}_s}{n})\diff s.
\]
Note that the jumps of $\hat M^{n,\gamma}$ disappear for $n\to\infty$, as $\gamma<1$, and that
\[
 (1-\frac{N^{n,\gamma}_t}{n})\int_0^t\lambda^\top \Zn_s\diff s\leq \langle\hat M^{n,\gamma}\rangle_t \leq \int_0^t\lambda^\top \Zn_s \diff s.
\]
As for each $t \geq 0$, $\E\frac{N^{n,\gamma}_t}{n}\to 0$ and $N^{n,\gamma}_t \geq 0$ it holds that $\frac{N^{n,\gamma}_t}{n} \overset{\PP}{\to} 0 $ and thus $\langle\hat M^{n,\gamma}\rangle_t\stackrel{\PP}{\to}\lambdai t$. Consequently, $\hat M^{n,\gamma}$ weakly converges to $\sqrt{\lambdai}B$, where $B$ is a standard Brownian motion.
As $\hN^{n,\gamma}_t=\exp(-\Lambda^{n,\gamma}_t)\int_0^t \exp(\Lambda^{n,\gamma}_s)\diff\hat M^{n,\gamma}_s$,  by previous arguments and using that $\Lambda^{n,\gamma}_t\stackrel{\PP}{\to} 0$ since $\gamma>0$, $\hN^{n,\gamma}$ converges to $\sqrt{\lambdai}B$ as well.
\end{proof}

%\newpage
\section{Some illustrating simulations}\label{section:pictures}

In this section we will show some graphs of simulations, illustrating some of the results proven in this paper. To illustrate all results would require too much space, so we will show two intuitive results, namely the first part of \autoref{MMBin Hoofdstelling 2} where we only speed up the underlying Markov process, and \autoref{MMbin Maintheorem}.

We simulate $N^{n,\alpha}_t$ as in \eqref{dNan} and $\hat{N}^{n,\beta}$ as in \autoref{thm:mainthm2} for a couple of parameter settings of $\alpha$ and $n$ on a time interval $[0,T]$. We take $T=3$ for the first and $T=10$ for the second simulation, to illustrate the interesting phenomona corresponding to the theorems. We take a state space of three elements for the Markov chain
\begin{equation*}
	Q = \begin{bmatrix}
		-5 & 1 & 5 \\
		 2 & -2 &  5\\
		 3 & 1 & -10
	\end{bmatrix}, 
\end{equation*} 
and the different values of the intensity are summarized by the vector $\lambda = 
	\begin{bmatrix}
		0.1 & 1 & 3
	\end{bmatrix}\Top$, a fixed choice  in all simulations.

We start by simulating $N^{n,\alpha}_t$ and $\lambda\Top Z_t$ for $n=1000$ fixed and varying values of $\alpha \in \{1,10,100,10000\}$ to illustrate the first part of \autoref{MMBin Hoofdstelling 2}. The sample paths of these simulations are shown in \autoref{fig: MMbin 1} and \autoref{fig: MMbin 2}.

\begin{figure}[H]
\centering
\begin{subfigure}{.5\textwidth}
  \centering
    \includegraphics[width=1.1\linewidth]{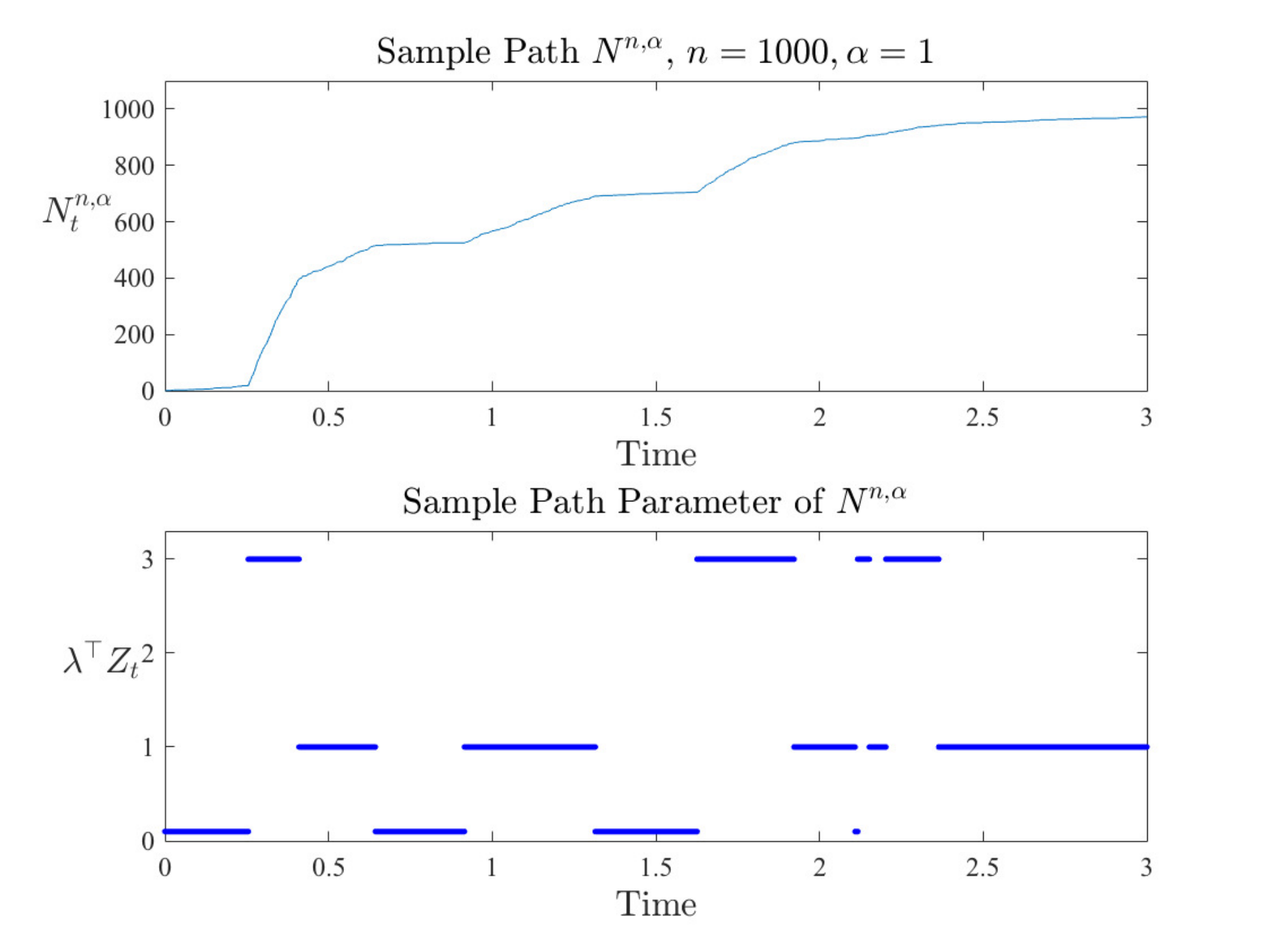} 
\end{subfigure}%
\begin{subfigure}{.5\textwidth}
  \centering
    \includegraphics[width=1.1\linewidth]{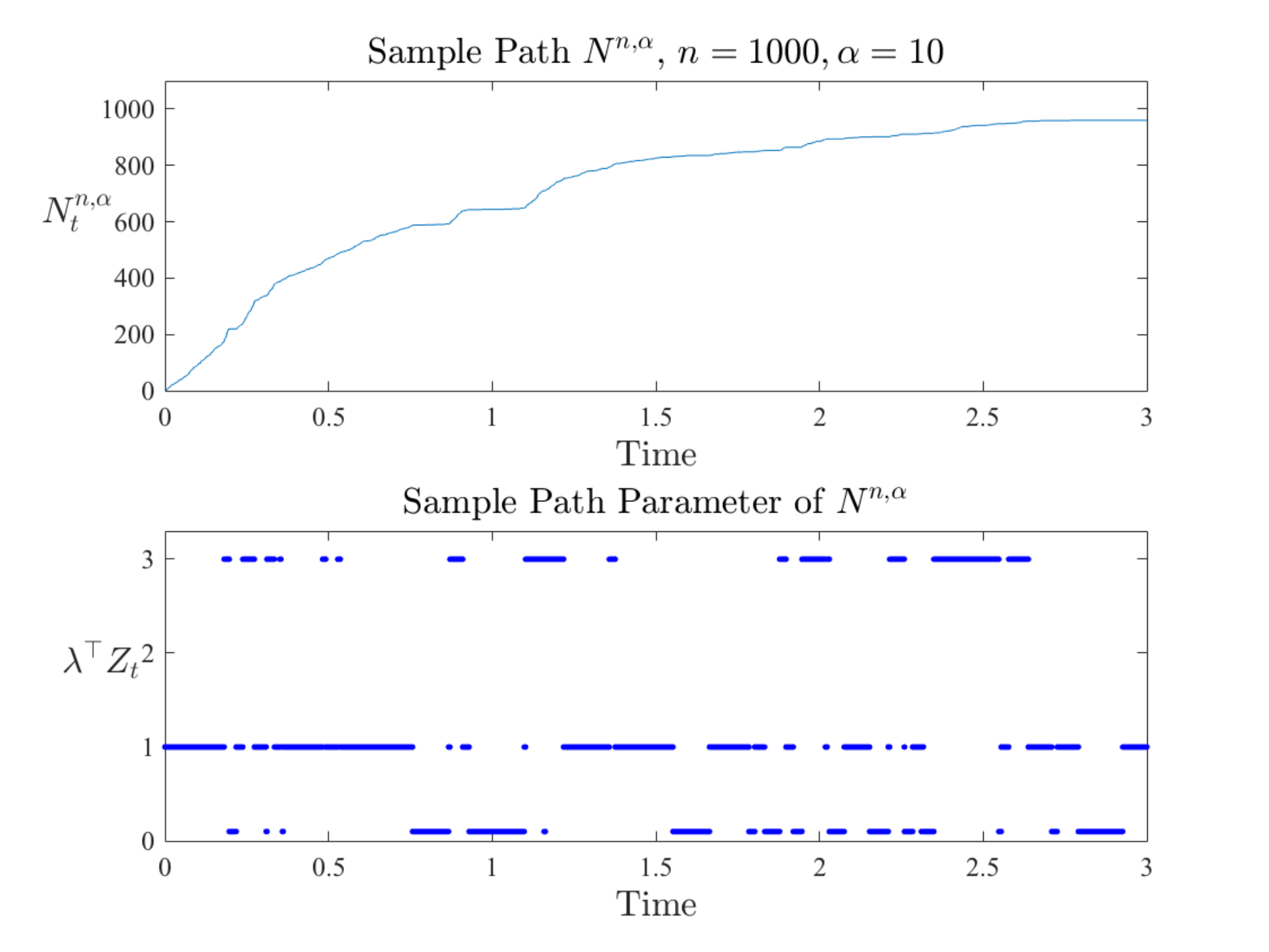} 
\end{subfigure}
\caption{Sample Paths with $\alpha=1$ (left) and $\alpha=10$ (right)}\vspace{-0.50cm}
\label{fig: MMbin 1}
\end{figure}

\begin{figure}[H]
\centering
\begin{subfigure}{.5\textwidth}
  \centering
  \includegraphics[width=1.1\linewidth]{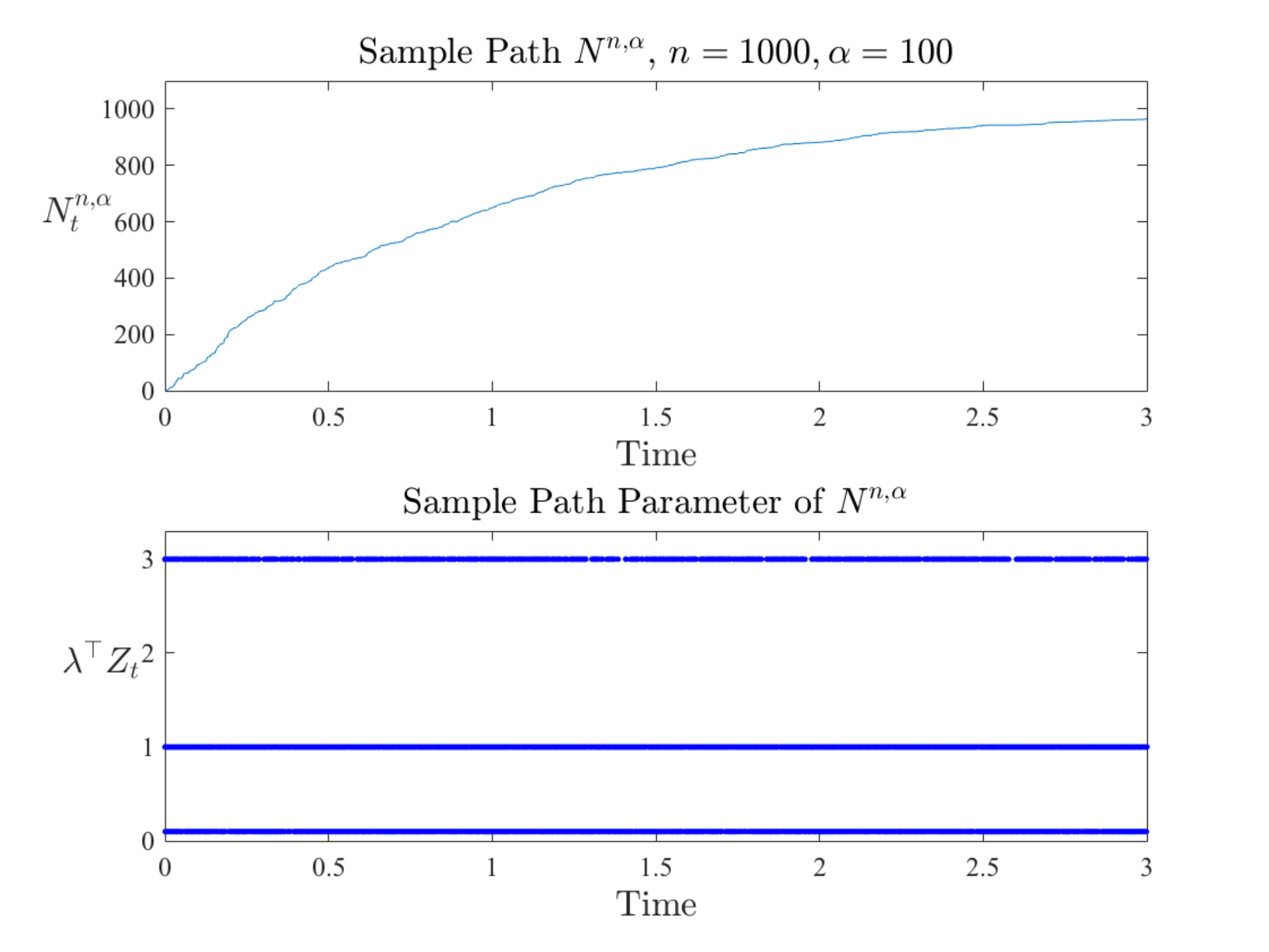} 
\end{subfigure}%
\begin{subfigure}{.5\textwidth}
  \centering
   \includegraphics[width=1.1\linewidth]{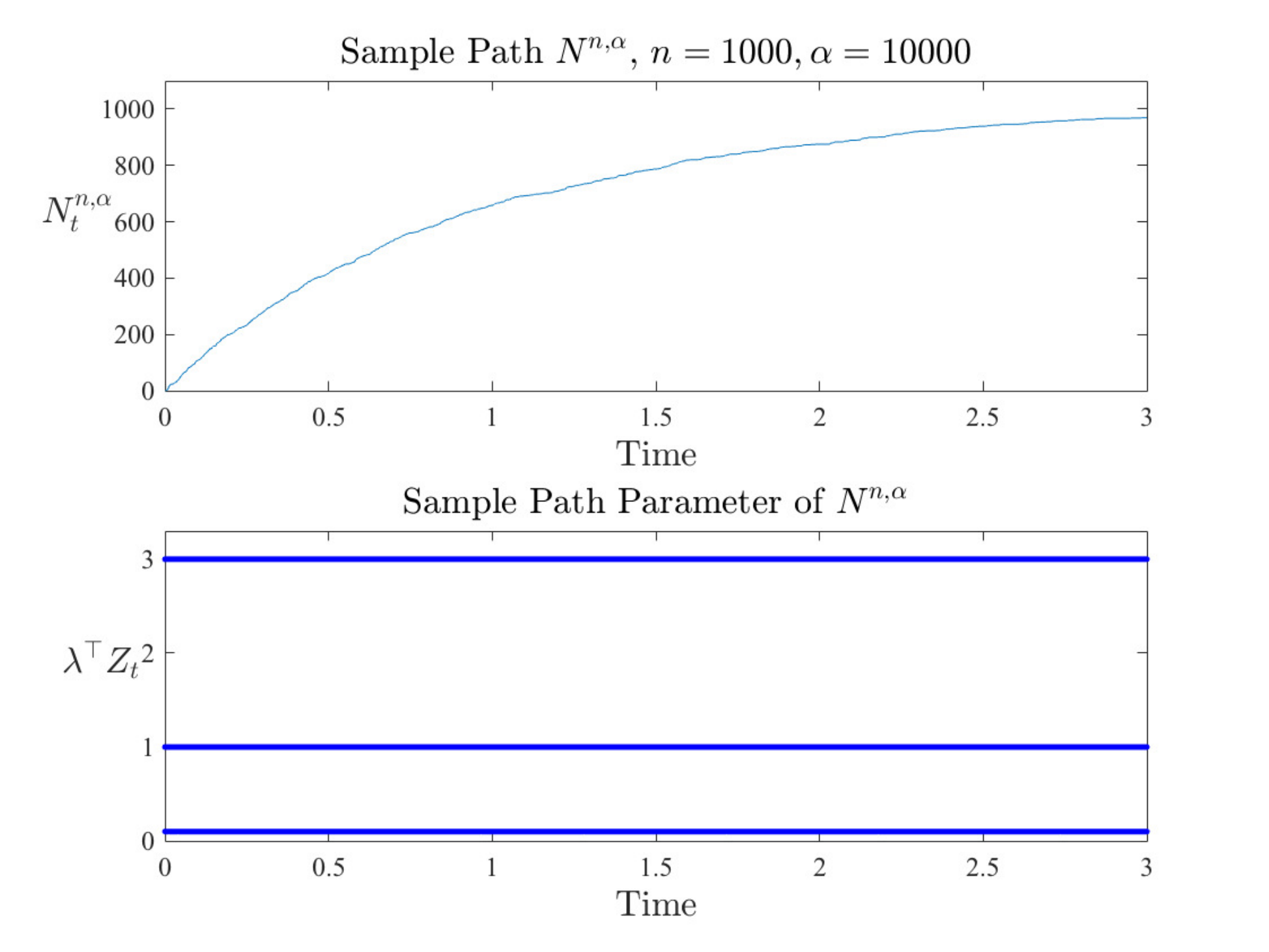} 
\end{subfigure}
\caption{Sample Paths with $\alpha=100$ (left) and $\alpha=10000$ (right)}
\label{fig: MMbin 2}
\end{figure}
One can see the effect from the Markov-modulated default rate in \autoref{fig: MMbin 1}. The contents of the first part of \autoref{MMBin Hoofdstelling 2} is that this modulating effect should disappear and the default rate becomes a deterministic constant $\lambdai$ in the limit. This is visible in \autoref{fig: MMbin 2}, where this modulating effect disappears and a constant default rate appears due to the Markov chain jumping very fast. 

Next we simulate the centered and scaled process $\hat{N}^{n,\beta}$, for $\beta =1$. We then have $\alpha=n$ and we choose $n \in \{10,100,1000,10000\}$ in order to illustrate \autoref{thm:mainthm2}. The sample paths are shown in \autoref{fig: CLT 1} and \autoref{fig: CLT 2}.

\begin{figure}[H]
\centering
\begin{subfigure}{.5\textwidth}
  \centering
\includegraphics[width=1\linewidth]{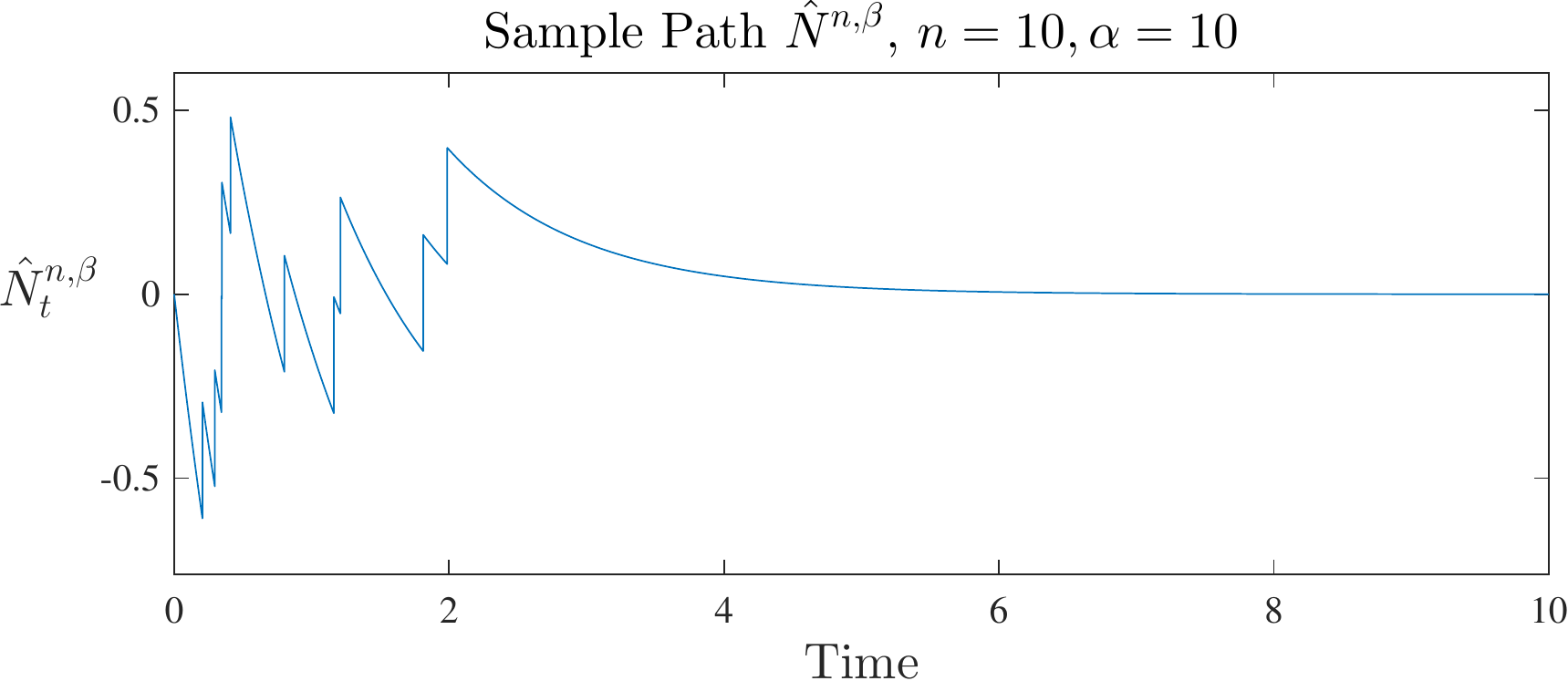} 
\end{subfigure}%
\begin{subfigure}{.5\textwidth}
  \centering
    \includegraphics[width=1\linewidth]{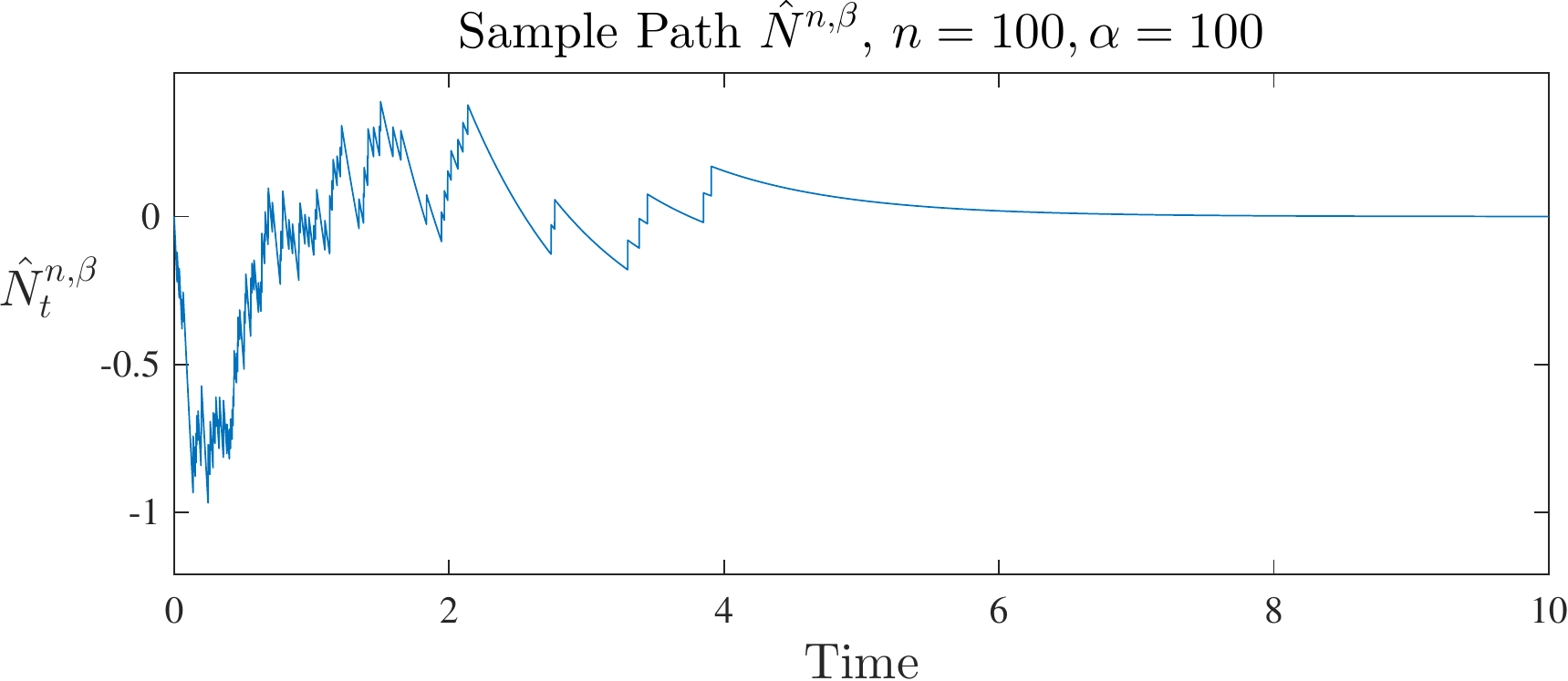} 
\end{subfigure}
\caption{CLT illustration $n=\alpha=10$ (left) and $n=\alpha=100$ (right)}
\label{fig: CLT 1}
\end{figure}

\begin{figure}[H]
\centering
\begin{subfigure}{.5\textwidth}
  \centering
    \includegraphics[width=1\linewidth]{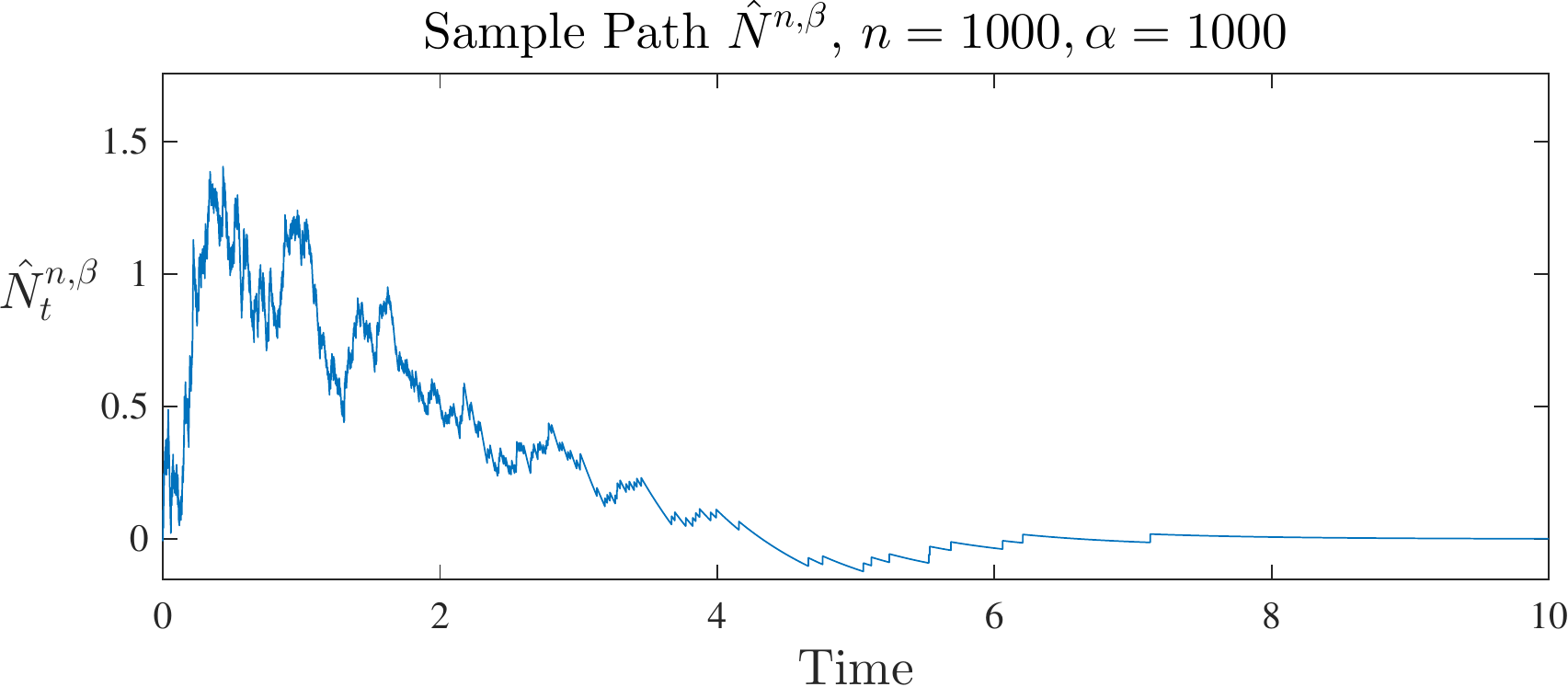} 
\end{subfigure}%
\begin{subfigure}{.5\textwidth}
  \centering
    \includegraphics[width=1\linewidth]{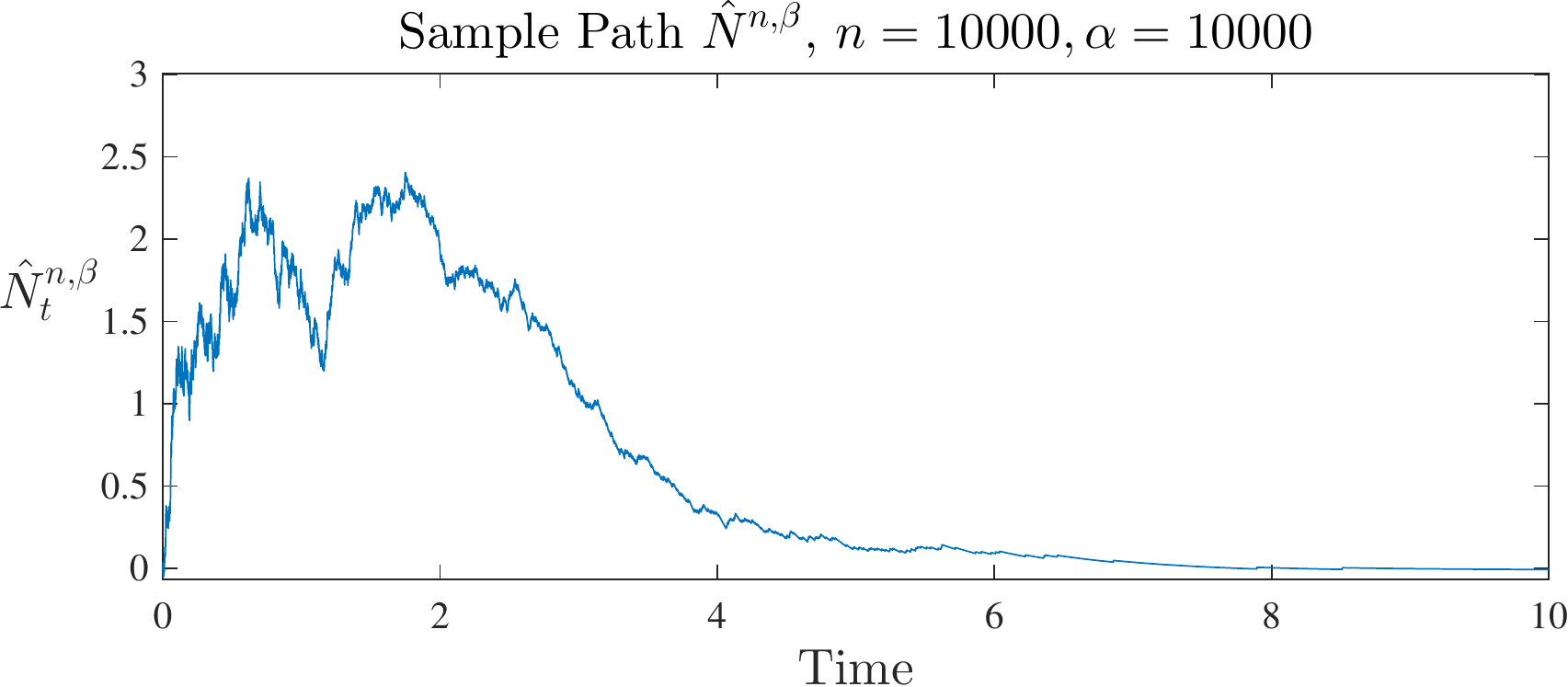} 
\end{subfigure}
\caption{CLT illustration $n=\alpha=1000$ (left) and $n=\alpha=10000$ (right)}
\label{fig: CLT 2}
\end{figure}

\autoref{fig: CLT 1} and \autoref{fig: CLT 2} illustrate how the process $\hat{N}^{n,\beta}$ converges to a continuous process, which fluctuates like a Gaussian martingale. 
We chose for the time scale $T=10$ to show that the quadratic variation $\langle \hat{N}\rangle_t$ of the limiting process $\hat{N}$ tends to a constant as $t \to \infty$. So these figures also illustrate the observations made in Remark~\ref{remark: Thm with beta}.

\section{Inclusion of recovery}\label{sec:recovery}

The process $N$ of \eqref{dN Bin} counts the number of defaults of companies (as one of the interpretations). After a default, a company disappears from the market. Alternatively, one might think of recovery of defaulted companies. In this section we present a few generalizations of previous results. As the proofs are similar to previous ones, but somewhat more involved, we only sketch them.

Supposing first that recovery happens at a constant rate $\mu$ per company and that Markov-modulation doesn't take place, we are dealing with a birth-death process $N$ whose semimartingale decomposition is, instead of \eqref{dN Bin}, now given by 
\begin{equation}\label{eq:bin+}
\diff N^n_t =\left(\lambda(n-N^n_t)-\mu N^n_t\right)\diff t + \diff M^n_t, \quad \Nn_0 = 0.
\end{equation} 
It is possible to show that $N^n$ is a Markov chain on $\{0,1,\ldots,n\}$ whose transition rates are $j\mu$ if $N$ jumps from $j$ to $j-1$ and $(n-j)\lambda$ if $N$ jumps from $j$ to $j+1$, whereas other transitions have rate zero. It follows that now $N_t$ has a Bin$(n,n\varrho_t)$ distribution, where $\varrho$ satisfies the differential equation
\[
\dot{\varrho}=\lambda(1-\varrho)-\mu\varrho,\, \quad \varrho_0=0.
\]
The solution to this equation is 
\[
\varrho_t=\frac{\lambda}{\lambda+\mu}(1-\exp(-(\lambda+\mu)t).
\]
To compute $\langle M^n\rangle_t=\langle N^n\rangle_t$, we first look at the  optional quadratic variation process $[N^n]$. As $[N^n]_t=\sum_{s\leq t}(\Delta N^n_s)^2$, and a nonzero $\Delta N^n_s$ is either plus or minus 1, which happens with rates $\lambda(n-N^n_t)$ and $\mu N^n_t$, respectively, it follows that $\frac{\diff}{\diff t}\langle M^n\rangle_t =  \lambda(n-N^n_t)+\mu N^n_t$.
\begin{proposition}\label{prop:EasyCLT2}
Let $\lambda,\mu>0$ be constants and let $N^n$ be given by \eqref{eq:bin+}.  Then the scaled and centered process 
\begin{equation*}
\hat{N}^n_t := n^{-1/2}(N^n_t-n\varrho_t)
\end{equation*}
converges weakly to the solution of the following SDE,
\begin{equation*}
\diff \hat{N}_t=-(\lambda+\mu) \hat{N}_t \diff t + \sigma(t)\diff B_t, \quad \hat{N}_0=0
\end{equation*} as $n \rightarrow \infty$.
Here $B$ is a standard Brownian motion and 
\[
\sigma(t)^2=\lambda-\frac{\lambda(\lambda-\mu)}{\lambda+\mu}(1-\exp(-(\lambda+\mu)t)=\lambda-(\lambda-\mu)\varrho_t.
\]
\end{proposition}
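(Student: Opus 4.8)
The plan is to mimic the proof of Proposition~\ref{Easy CLT N Bin} almost verbatim, with the only essential changes coming from the extra recovery drift $-\mu\Nn_t$ and the modified quadratic variation rate $\lambda(n-\Nn_t)+\mu\Nn_t$. First I would form the scaled martingale $M^n:=n^{-1/2}M$ from \eqref{eq:bin+} and verify the two hypotheses of the martingale central limit theorem~\citet[Theorem~VIII.3.11]{jacod2013limit}: the asymptotic negligibility of jumps, $|\Delta M^n_t|\leq n^{-1/2}\to 0$, which is immediate since jumps of $N^n$ are $\pm 1$; and the convergence in probability of $\langle M^n\rangle_t$ to a deterministic continuous limit. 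For the latter I would use the displayed rate $\tfrac{\diff}{\diff t}\langle M^n\rangle_t=\lambda(n-\Nn_t)+\mu\Nn_t$, so that
\[
\langle M^n\rangle_t=\int_0^t\Big(\lambda\big(1-\tfrac{\Nn_s}{n}\big)+\mu\tfrac{\Nn_s}{n}\Big)\diff s.
\]
Since $N^n_t$ is Bin$(n,\varrho_t)$, the strong law of large numbers gives $\Nn_s/n\to\varrho_s$ a.s.\ (and boundedly), so dominated convergence yields $\langle M^n\rangle_t\to\int_0^t(\lambda+(\mu-\lambda)\varrho_s)\diff s=\int_0^t\sigma(s)^2\diff s$. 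This identifies the limit Gaussian martingale $\tilde B$ with $\langle\tilde B\rangle_t=\int_0^t\sigma(s)^2\diff s$, equivalently $\tilde B=\int_0^\cdot\sigma(s)\diff B$ for a standard Brownian motion $B$.

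Next I would pass from the martingale limit to the process limit exactly as in the proposition. Taking differentials of $\hNn_t=n^{-1/2}(\Nn_t-n\varrho_t)$ and using both \eqref{eq:bin+} and the ODE $\dot\varrho=\lambda(1-\varrho)-\mu\varrho$ to cancel the deterministic drift, one obtains
\[
\diff\hNn_t=-(\lambda+\mu)\hNn_t\diff t+\diff M^n_t.
\]
Introducing the integrating factor $\hat X^n_t:=e^{(\lambda+\mu)t}\hNn_t$ gives $\diff\hat X^n_t=e^{(\lambda+\mu)t}\diff M^n_t$, a stochastic integral of a continuous (deterministic) integrand against $M^n$. I would then invoke the weak-convergence theorem for stochastic integrals together with the P-UT property for martingales (as cited in the proof of Theorem~\ref{MMBin hoofdstelling}, \citet[VI.6.13, VI.6.22]{jacod2013limit}) to conclude $\hat X^n\Rightarrow\int_0^\cdot e^{(\lambda+\mu)s}\diff\tilde B_s$, and hence $\hNn\Rightarrow\hN$ solving $\diff\hN_t=-(\lambda+\mu)\hN_t\diff t+\sigma(t)\diff B_t$ with $\hN_0=0$.

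I do not anticipate a genuine obstacle here, since the structure is identical to Proposition~\ref{Easy CLT N Bin}; the only care needed is bookkeeping in two places. The first is confirming the quadratic-variation rate: because both up-jumps (rate $\lambda(n-\Nn)$) and down-jumps (rate $\mu\Nn$) contribute $(\Delta N)^2=1$ to $[N^n]$, the predictable compensator is their sum, giving the stated $\sigma(t)^2=\lambda-(\lambda-\mu)\varrho_t$ after substituting $\varrho$; I would double-check the algebra reducing $\lambda(1-\varrho_t)+\mu\varrho_t$ to this closed form using $\varrho_t=\tfrac{\lambda}{\lambda+\mu}(1-e^{-(\lambda+\mu)t})$. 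The second is that, unlike in the non-modulated constant-rate case, the limiting $\tilde B$ now has a time-varying volatility $\sigma(t)$, so I must keep the limit in the form $\sigma(t)\diff B_t$ rather than absorbing it into a time change; this is purely notational. The strong law of large numbers step requires only that $N^n_t$ is genuinely Bin$(n,\varrho_t)$, which is asserted in the text preceding the proposition, so the dominated-convergence argument goes through without modification.
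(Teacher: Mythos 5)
Your proposal is correct and takes essentially the same approach as the paper: the paper's own proof likewise reduces everything to the scheme of Proposition~\ref{Easy CLT N Bin}, noting only the modified drift $-(\lambda+\mu)\hat{N}^n_t$, the quadratic-variation rate $\lambda-(\lambda-\mu)N^n_t/n\to\sigma^2(t)$, and the negligibility of jumps. The details you spell out (the SLLN for the Bin$(n,\varrho_t)$ marginals with dominated convergence, the integrating factor $e^{(\lambda+\mu)t}$, and the P-UT/stochastic-integral convergence step) are precisely what the paper compresses into ``the remainder of the proof is as before.''
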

The proof of this proposition is similar to that of Proposition~\ref{Easy CLT N Bin}, so we only highlight the main differences. For the process $\hat{N}^n_t$ we now obtain
\[
\diff\hat{N}^n_t= - (\lambda+\mu) \hat{N}^n_t \diff t +  \diff \hat M^n_t,
\]
where the martingale $\hat M^n$ has quadratic variation satisfying (see above) 
\[
\frac{\diff}{\diff t}\langle \hat M^n\rangle_t=\frac{1}{n}\frac{\diff}{\diff t}\langle  M^n\rangle_t=\lambda-(\lambda-\mu)\frac{N_t}{n}\to\lambda- (\lambda-\mu)\varrho_t=\sigma^2(t), \mbox{ for $n\to\infty$}. 
\]
Obviously, the jumps of $M^n$ are negligible for large $n$. The remainder of the proof is as before.
\medskip\\
In \eqref{eq:bin+} the rates $\lambda$ and $\mu$ are constant. From now on we assume that regime switching will be present, so we have time varying rate $\lambda_t=\lambda\Top Z_{t-}$ as before and likewise, in similar notation, $\mu_t=\mu\Top Z_{t-}$. Hence for the Markov modulated case, we now have, instead of \eqref{MM Bin dNn},
\begin{equation}\label{eq:mmbin+}
\diff N^n_t = \left(\lambda\Top Z_t(n-N^n_t)-\mu\Top Z_{t-} N^n_t\right) \diff t + \diff M^n_t,
\end{equation} where $M^n$ is a martingale with respect to $\F = \{\cF^Y_t \vee \cF^Z_\infty, t\geq 0\}$. 

\begin{remark}
In principle, the recovery rate $\mu_t$ could depend on another Markov chain $\tilde Z$, leading to a seemingly more general model. But combining the chains $Z$ and $\tilde Z$ into a bivariate chain, would lead to a representation like \eqref{eq:mmbin+} again with the matrix $Q$ composed from the transition matrices of $Z$ and $\tilde Z$.
\end{remark}
One can then investigate the limit behaviour of the process $N^n$ given by \eqref{eq:mmbin+} for $n\to\infty$ together with rapid switching of the Markov chain. We confine ourselves to a generalization of Theorem~\ref{thm:mainthm2}. We use notation introduced in previous sections and self-evident analogies. We will need the function $\varrho$ satisfying
\begin{equation}\label{eq:varrhoinf}
\dot\varrho_t = \lambdai(1-\varrho_t)-\mu_\infty\varrho_t,\,\quad \varrho_0=0,
\end{equation}
where $\mu_\infty=\mu\Top\pi$, and the functions $\sigma_1$ and $\sigma_2$ as specified after the statement of the theorem.
\begin{theorem}\label{thm:mainthm3}
Let $\Nn$ be given by \eqref{eq:mmbin+} and $\varrho$ by \eqref{eq:varrhoinf}. Then the scaled and centered process $\hat{N}^{n,\beta}$ given by 
\begin{equation*}
\hat{N}^{n,\beta}_t :=n^{-1/2(1+(1-\beta)^+)}(N^{n}_t-n\varrho_t),
\end{equation*} converges weakly (as $n \rightarrow \infty$) to the solution of the following SDE 
\begin{equation}
\diff \hN_t = -(\lambdai+\mu_\infty) \hN_t \diff t  +\one_{\{\beta\leq 1\}} \sigma_1(t)\diff B^1_t+ \one_{\{\beta\geq 1\}} \sigma_2(t)\diff B^2_t, \quad \hN_0 = 0,
\end{equation} where $B^1$ 
and $B^2$ are independent Brownian motions. 
Alternatively, we have the representation
\[
\diff \hN_t = -(\lambdai+\mu_\infty) \hN_t \diff t  +
\left(\one_{\{\beta\leq 1\}} \sigma_1(s)^2 +
\one_{\{\beta\geq 1\}}\sigma_2(s)^2\right)^{1/2} \diff B_t,
\]
where $B$ is a standard Brownian motion.
\end{theorem}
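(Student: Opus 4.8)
The plan is to transcribe the proof of Theorem~\ref{thm:mainthm2}, replacing the pure-death dynamics by the birth-death dynamics of \eqref{eq:mmbin+} and the single rate $\lambdai$ by the pair $(\lambdai,\mu_\infty)$. The correct stochastic centering is again the conditional mean: conditionally on $\cF^Z$ the obligors evolve as independent two-state (time-inhomogeneous) Markov chains with default rate $\lambda\Top Z_{n^\beta s}$ and recovery rate $\mu\Top Z_{n^\beta s}$, so $N^n_t$ is $\mathrm{Bin}(n,\varrho^{n,\beta}_t)$ given $\cF^Z$, where $\varrho^{n,\beta}_t=\E[Y^1_t\mid\cF^Z]$ solves $\dot\varrho^{n,\beta}_t=\lambda\Top Z_{n^\beta t}(1-\varrho^{n,\beta}_t)-\mu\Top Z_{n^\beta t}\varrho^{n,\beta}_t$ with $\varrho^{n,\beta}_0=0$. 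Writing $N^n_t-n\varrho_t=(N^n_t-n\varrho^{n,\beta}_t)+n(\varrho^{n,\beta}_t-\varrho_t)$ and inserting the scaling $n^{-1/2(1+(1-\beta)^+)}$ decomposes $\hat N^{n,\beta}$ into a martingale-fluctuation term and a centering-fluctuation term, precisely the split $\hat N^{n,\beta}=\hat K^{n,\beta}+\hat H^{n,\beta}$ underlying Proposition~\ref{prop:p3} and Theorem~\ref{thm:mainthm2}.

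First I would treat the two building blocks. For $\hat K^{n,\beta}_t:=n^{-1/2}(N^n_t-n\varrho^{n,\beta}_t)$, subtracting the two defining equations gives $\diff\hat K^{n,\beta}_t=-(\lambda+\mu)\Top Z_{n^\beta t}\,\hat K^{n,\beta}_t\diff t+n^{-1/2}\diff M^n_t$, so $\hat K^{n,\beta}$ is an exponential integral of the scaled counting martingale. Since $\tfrac{\diff}{\diff t}\langle M^n\rangle_t=\lambda\Top Z_{n^\beta t}(n-N^n_t)+\mu\Top Z_{n^\beta t}N^n_t$, the $L^2$-convergence $n^{-1}N^n_t\to\varrho_t$ (obtained from the conditional-binomial variance bound of Step~4 of Theorem~\ref{MMbin Maintheorem}) combined with the ergodic averaging of Lemma~\ref{lemma:muf} yields $\langle n^{-1/2}M^n\rangle_t\stackrel{\PP}{\to}\int_0^t(\lambdai(1-\varrho_s)+\mu_\infty\varrho_s)\diff s$, and the MCLT gives the limit SDE with $\sigma_2(t)^2=\lambdai(1-\varrho_t)+\mu_\infty\varrho_t$ (which collapses to $\lambdai e^{-\lambdai t}$ when $\mu=0$, recovering Theorem~\ref{thm:mainthm2}). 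For $\hat H^{n,\beta}_t:=n^{\beta/2}(\varrho^{n,\beta}_t-\varrho_t)$, subtracting the ODEs for $\varrho^{n,\beta}$ and $\varrho$ gives $\dot e_t=-(\lambda+\mu)\Top Z_{n^\beta t}\,e_t+c_t\Top(Z_{n^\beta t}-\pi)$ with the \emph{deterministic} vector $c_t:=(1-\varrho_t)\lambda-\varrho_t\mu$; solving this linear equation exhibits $\hat H^{n,\beta}$ as the deviation-matrix fluctuation integral $n^{\beta/2}\int_0^t(\cdots)c_s\Top(Z_{n^\beta s}-\pi)\diff s$ already analysed in Theorem~\ref{MMbin Maintheorem}, whose Gaussian limit produces $\diff\hat H_t=-(\lambdai+\mu_\infty)\hat H_t\diff t+\sigma_1(t)\diff B^1_t$ with $\sigma_1(t)^2=c_t\Top(\diag\{\pi\}D\Top+D\diag\{\pi\})c_t$.

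With the two blocks in hand I would assemble the regimes exactly as in Theorem~\ref{thm:mainthm2}: $\hat N^{n,\beta}=n^{(\beta-1)/2}\hat K^{n,\beta}+\hat H^{n,\beta}$ for $\beta<1$ (so $\hat H$ survives), $\hat N^{n,\beta}=\hat K^{n,\beta}+n^{(1-\beta)/2}\hat H^{n,\beta}$ for $\beta>1$ (so $\hat K$ survives), and both survive and add for $\beta=1$, yielding the indicator-weighted SDE. The mean-reversion coefficient $-(\lambdai+\mu_\infty)$ comes from the u.c.p.\ averaging $\int_0^t(\lambda+\mu)\Top Z_{n^\beta s}\diff s\to(\lambdai+\mu_\infty)t$, and the independence of $B^1$ and $B^2$ follows, as in Step~4 of Theorem~\ref{MMbin Maintheorem}, from Assumption~\ref{assumption1}: $\hat H$ is driven by the chain's martingale $\tilde M^n$ and $\hat K$ by the counting martingale $M^n$, whose joint quadratic variation has vanishing off-diagonal because $[N,Z]\equiv0$.

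The hard part will be the fluctuation limit for $\hat H^{n,\beta}$. In contrast to Theorem~\ref{MMbin Maintheorem}, where the factors $(1-\varrho_s)$ and $\exp(\lambda\Top\zeta^n_s)$ cancelled and left a constant $V$, here both the mean-reversion coefficient $(\lambda+\mu)\Top Z_{n^\beta t}$ and the vector $c_t$ are genuinely time-dependent, so $\sigma_1$ stays time-dependent and one must carry $c_t$ through the identity $DQ=\Pi-I$, the u.c.p.\ controls, and the stochastic version of Lemma~\ref{lemma:muf} with a $t$-dependent integrand. Once this fluctuation CLT is in place, the remaining estimates are a routine transcription of the no-recovery arguments.
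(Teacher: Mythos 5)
Your proposal is correct and is essentially the paper's own approach: the paper treats general $\beta$ exactly through the $\hat H^{n,\beta}/\hat K^{n,\beta}$ decomposition of Propositions~\ref{prop:p1}--\ref{prop:p3} and Theorem~\ref{thm:mainthm2}, and its sketch of the recovery case works with the same integrand $\Psi^n_s=e^{(\lambda+\mu)\Top\zeta^n_s}c_s\Top D$ and arrives at the same variances $\sigma_1(s)^2=c_s\Top(\diag\{\pi\}D\Top+D\diag\{\pi\})c_s$ and $\sigma_2(s)^2=\lambdai(1-\varrho_s)+\mu_\infty\varrho_s$ that you derive. Indeed, unwinding your $\hat H$ block via $Z^n_s-\pi=-DQZ^n_s$ and the chain SDE \eqref{dZ} recovers precisely the first two terms of the paper's representation \eqref{eq:xn+}, while your $\hat K$ block is its third term, so your bivariate-martingale/Assumption~\ref{assumption1} independence argument coincides with the paper's.
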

We close with a few remarks on the proof. For the case $\beta=1$ it is along the same lines as the one for Theorem~\ref{MMbin Maintheorem}, but with more complicated expressions, although methodoligally there are hardly any changes. 
One now writes $\hNn_t = e^{-(\lambda+\mu)\Top \zeta^n_t}\Xn_t$, where $\Xn_t$ is given by an analog of \eqref{eq: 2nd SIE},
\begin{equation}\label{eq:xn+}
\Xn_t = - \int_0^t \Psi^n_s \diff n^{-1/2} \Zn_s + \int_0^t \Psi^n_s \diff n^{-1/2} \tilde{M}^n_s + \int_0^t e^{(\lambda+\mu)\Top \zeta^n_s} \diff n^{-1/2} \Mn_s,
\end{equation}
with in the present situation
\[
\Psi^n_s=e^{(\lambda+\mu)\Top\zeta^n_s}((1-\varrho_s)\lambda-\varrho_s\mu)\Top D.
\]
Another main difference is the quadratic variation of the bivariate martingale $\mathbf{M}^{\zeta,n}$.
One now obtains
\begin{align}
    \langle\mathbf{M}^{\zeta,n} \rangle_t & =  \int_0^t \begin{bmatrix}
    \Psi^n_s \tilde{V}^{n,*}_s (\Psi^n_s)\Top & 0 \\
    0 & e^{2(\lambda+\mu)\Top \zeta^n_s} V^{n,*}_s 
    \end{bmatrix} \diff s,\label{eq:qvm+}
\end{align}
where $\tilde{V}^{n,*}_s$ is as before, but
\begin{align*}
     & V^{n,*}_s = \lambda\Top \Zn_s (1 - n^{-1} \Nn_s)+\mu\Top \Zn_s n^{-1} \Nn_s \to \lambdai(1-\varrho_s)+\mu_\infty\varrho_s.
\end{align*}
As a consequence, the limit of $\langle\mathbf{M}^{\zeta,n} \rangle_t$ is not an expression as simple as before, but can still be computed explicitly (it only involves integration of exponential functions). For reasons of  brevity we just write
\begin{align*}
\int_0^t  
    \Psi^n_s \tilde{V}^{n,*}_s (\Psi^n_s)\Top  \diff s & \to \int_0^t e^{2(\lambdai+\mu_\infty)s}\Phi_s(\mathrm{diag}\{\pi\}D\Top+D\mathrm{diag}\{\pi\})\Phi_s\Top\diff s   \\
& =:\int_0^te^{2(\lambdai+\mu_\infty)s}\sigma_1(s)^2\diff s    ,
\end{align*}
where 
\[
\Phi_s=(1-\varrho_s)\lambdai\Top -\varrho_s\mu_\infty\Top,\quad\sigma_1(s)^2=\Phi_s(\mathrm{diag}\{\pi\}D\Top+D\mathrm{diag}\{\pi\})\Phi_s\Top
\]
and
\[
\int_0^t e^{2(\lambda+\mu)\Top \zeta^n_s} V^{n,*}_s\diff s \to \int_0^t e^{2(\lambdai+\mu_\infty)s}(\lambdai(1-\varrho_s)+\mu_\infty\varrho_s)\diff s=:\int_0^t e^{(2(\lambdai+\mu_\infty)s}\sigma_2(s)^2\diff s.
\]

\bibliographystyle{chicago-ff}
\bibliography{mmbinbibliography}

\end{document}